\newtheorem{Lemma}{\quad Lemma}[section]
\numberwithin{equation}{section}
\title{\Large\bf
A fully discrete local discontinuous Galerkin method with the generalized
numerical flux to solve the tempered fractional reaction-diffusion
equation }
\author{{ Leilei Wei$^{a}$, Yinnian He$^{b}$
\footnote{Corresponding author. \newline $~~~~~~$E-mail addresses:
$^{a}$leileiwei@haut.edu.cn, $^{a}$leileiwei09@gmail.com, $^{b}$heyn@mail.xjtu.edu.cn.
}
}\\
\footnotesize  \emph{a. College of Science, Henan University of Technology,
Zhengzhou 450001, Henan Province, PR China}
\\
\footnotesize  \emph{b. School of Mathematics and Statistics, Xi’an Jiaotong University, Xi’an 710049, PR China}
}
\date{}
\newtheorem{lem}{Lemma}[section]
\newtheorem{thm}[lem]{Theorem}
\begin{document}
\maketitle
{\small
\noindent{\bfseries Abstract:}
The tempered fractional diffusion equation could be recognized as the generalization of
the classic fractional diffusion equation that the truncation effects are included in the bounded domains.
This paper focuses on designing the high order fully discrete local
discontinuous Galerkin (LDG) method based on the generalized alternating numerical fluxes
for the tempered fractional diffusion equation.
From a practical point of view,
the generalized alternating numerical flux which is different from the purely alternating numerical flux
has a broader range of applications.
We first design an efficient finite difference scheme to approximate the tempered fractional derivatives
and then a fully discrete LDG method for the tempered fractional diffusion equation.
We prove that the scheme is unconditionally stable and convergent with the order
$O(h^{k+1}+\tau^{2-\alpha})$,
where $h, \tau$ and $k$ are the step size in space, time and the degree of piecewise polynomials, respectively.
Finally
numerical experimets are performed to show the effectiveness and testify
the accuracy of the method.

{\noindent\bfseries Keywords}: Tempered fractional diffusion
equations; Local discontinuous Galerkin method; Stability; Error
estimates.
}

\section {Introduction}

Anomalous diffusion models which better describe transport processes in complex heterogeneous systems
are the fluid limit of a time random walk with a probability distribution function
for the displacements and the waiting times.
In fact, from a view of practice, upper bounds are existed on the waiting times between the displacements that a particle could do or on these displacements.
Thus one should consider these upper bounds in the model.
In order to recover finite moments,  truncating waiting time probability distribution function and the displacements is
a feasible method.
Mantegna and Stanley \cite{mss94}
and Koponen \cite{kop95}  applied truncated L\'{e}vy processes
to get rid of large displacements.
Rosi\'{n}ski \cite{roz07}
replaced the sharp cutoff by a smooth exponential damping of the tails of the probability distribution function.
In the fluid limits, exponentially tempered L\'{e}vy processes result in a tempered fractional diffusion equation \cite{bm10}.
If the waiting times be tempered, a fractional diffusion equation with a tempered time derivative will be modeled \cite{mmz08}.

In recent years, many numerical methods are presented to solve
fractional subdiffusion and superdiffusion equations, for example,
finite difference methods
\cite{fd2,fd11,fd5,lm18,fd6,fd7,fd8,fd9,fd10},
finite element methods\cite{fe12,fe13,fe14,fe15,fe16,fe7},
spectral methods\cite{sp1,sp2,sp3},
discontinuous Gakerkin methods \cite{wei1}.
Other numerical methods such as homotopy perturbation method
and the variational method also works very efficiently,
for details the readers can refer to \cite{liao1,ot1}.
However, compared with a lot of papers on numerical methods of
fractional partial
differential equations, the literature about the tempered fractional differential equations is limited.
Li and Deng \cite{li14} proposed some high-order schemes based
on the weighted and shifted Gr\"{u}nwald difference
operators to solve the space tempered fractional diffusion equations.
In \cite{ydw17},  Yu et al. analyzed the third order difference schemes
for the space tempered fractional diffusion equations.
In \cite{bm10}, Baeumera and Meerschaert developed
a finite difference scheme to solve the tempered fractional diffusion equation,
and discussed the stability and convergence of the method.
Cartea and delCastillo-Negrete \cite{cdd07} considered
a finite difference method for the tempered fractional Black-Merton-Scholes
equation.
Hanert and Piret \cite{hpsiam14} developed a pseudo-spectral method
based on a Chebyshev expansion in space and time to
discretize the space-time fractional diffusion equation
with exponential tempering in both space and time,
and proved that the proposed
scheme yields an exponential convergence when the
solution is smooth.
Zhang et al. \cite{zhang16} discussed a high-order finite difference scheme
for the tempered fractional Black-Scholes equation.
Hao et al. \cite{hao17} discussed a second-order difference scheme
for the time tempered fractional diffusion equation,
and analyzed its stability and convergence.

In order to broaden the applicable range of tempered fractional diffusion models,
it is meaningful and challenging
to construct high-order numerical schemes for the model equation.
The discontinuous Galerkin (DG) methods is naturally formulated
for any order of accuracy in each element,
and is flexibility and efficiency in terms of mesh and shape functions
\cite{zhang14}.
In this paper,
we will present a fully discrete local discontinuous Galerkin
(LDG) method based on generalized alternating numerical fluxes
to solve the tempered fractional diffusion equation
\begin{equation}\label{question}
_0^{\rm C}D_t^{\alpha,\gamma}u(x,t)+\rho u(x,t)-
\frac{\partial^{2}u(x,t)}{\partial x^{2}}=f(x,t),
\quad (x,t)\in (a,b)\times (0,T],
\end{equation}
with the initial solution $u(x,0)=u_0(x)$,
where $f$ and $u_0$ are given smooth functions, and
$\rho>0$ is the constant reaction rate.
Here
\begin{equation}
_0^{\rm C}\mathcal{D}_t^{\alpha,\gamma}u(x,t)
=
e^{-\gamma t}~~_{0}^{C}D_t^{\alpha}\left[e^{\gamma t}u(x,t)\right]
=
\frac{e^{-\gamma t}}{\Gamma(1-\alpha)}
\int_0^{t}\frac{\partial}{\partial s}
\left[e^{\gamma s}u(x,s)\right]\frac{ds}{(t-s)^{\alpha}},
\end{equation}
is the tempered fractional derivative of order $0<\alpha<1$,
with $\Gamma(\cdot)$ being the Gamma function.
In this paper we do not pay attention to boundary condition,
hence the solution is considered to be either periodic.

The outline of the paper is as follows. We first introduce
some basic notations and preliminaries which will be used later.
Then in Sec. 3
a fully discrete LDG method for the tempered fractional equation
\eqref{question}, and also
discuss its stability and convergence.
Numerical examples are provided
to show the accuracy and capability of the scheme in Sect. 4.
Some concluding remarks are given in the final section.

%
%
%
%
%

\section {Fully-discrete LDG scheme}

As the usual treatment in LDG method,
we rewrite equation (\ref{question}) as the equivalent first-order system:
\begin{equation}\label{eq3}
p=u_x,\quad
_0^CD_t^{\alpha,\gamma}u(x,t)+\rho u(x,t)-p_x=f.
\end{equation}
Let $M$ be a positive integer, and denote
$\tau=T/M$ be the time step and $t_n=n\tau$ be mesh point,
with $n=0,1,\ldots, M$.
Namely, we would like to seek the approximation solutions
$u_h^n$ and $p_h^n$ in the discontinuous finite element space.

Firstly consider the discretization of tempered fractional derivative
$ _0^CD_t^{\alpha,\gamma}u(x,t)$.
At any time level $t_n$, it is approximated as follows
\begin{equation}\label{tf1}
\aligned
_0^CD_t^{\alpha,\gamma}u(x,t_n)
=&\;
\frac{e^{-\gamma t_n}}{\Gamma(1-\alpha)}\sum\limits_{i=0}^{n-1}
\int_{t_i}^{t_{i+1}}
\frac{\partial }{\partial s}[e^{\gamma s}u(x,s)]\frac{ds}{(t_n-s)^{\alpha}}
=\Phi^n(x)+\mu^n(x),
\endaligned
\end{equation}
where
\begin{equation}
\Phi^n(x)
\aligned
=&\;
\frac{e^{-\gamma t_n}}{\Gamma(1-\alpha)}\sum\limits_{i=0}^{n-1}
\int_{t_i}^{t_{i+1}}\frac{e^{\gamma t_{i+1}}u(x,t_{i+1})-e^{\gamma t_{i}}u(x,t_{i})}{\tau}\frac{ds}{(t_n-s)^{\alpha}},
\endaligned
\end{equation}
and $\mu^n(x)$ is the truncation error in time direction.
Similar to the proof in \cite{sp3}, we can know that
\begin{equation}
\|\mu^n(x)\|\leq C\tau^{2-\alpha},
\end{equation}
where the bounding constant $C>0$ depends on $T, \alpha$ and $u$.
Further manipulation yields
\begin{equation}\label{tf1}
\aligned
\Phi^n(x)=&\;
\frac{\tau^{1-\alpha}}{\Gamma(2-\alpha)}\sum\limits_{i=0}^{n-1}b_{n-i-1}
\frac{e^{-\gamma \tau (n-i-1)}u(x,t_{i+1})-e^{-\gamma \tau (n-i)}
u(x,t_{i})}{\tau}
\\
=&\;
\frac{\tau^{1-\alpha}}{\Gamma(2-\alpha)}
\sum\limits_{i=0}^{n-1}b_i
\frac{e^{-i\gamma \tau }u(x,t_{n-i})-e^{-(i+1)\gamma \tau }u(x,t_{n-i-1})}{\tau}
\\
=&\;
\frac{(\Delta
t)^{-\alpha}}{\Gamma(2-\alpha)}(u(x,t_{n})+\sum\limits_{i=1}^{n-1}
(b_i-b_{i-1})e^{-i\gamma \tau }u(x,t_{n-i})-b_{n-1}e^{-n\gamma \tau }u(x,t_{0})),
\endaligned
\end{equation}
where $b_i=(i+1)^{1-\alpha}-i^{1-\alpha}$.

Let
$a=x_{\frac{1}{2}}<x_{\frac{3}{2}}<\cdots<x_{N+\frac{1}{2}}=b$,
where $N$ is an integer.
Define $I_{j}=[x_{j-\frac{1}{2}}, x_{j+\frac{1}{2}}]$
with the cell length $h_j=x_{j+\frac{1}{2}}-x_{j-\frac{1}{2}}$,
for $j=1,\ldots N$,
and denote
$h=\max\limits_{1\leq j\leq N}h_j$.
We assume the partition is quasi-uniform mesh, that is,
there exists a positive constant $\kappa$ independent of $h$ such that
$h_j\geq \kappa h$.
The associated discontinuous Galerkin space $V_h^k$ is defined as
\begin{equation}
V_h^k=\{v:v\in P^k(I_j),x\in I_j,j=1,2,\cdots N\}\nonumber.
\end{equation}
As usual, at each cell interfaces $x_{j+\frac12}$,
we use $v_{j+\frac{1}{2}}^+$ and
$v_{j+\frac{1}{2}}^-$, respectively, to denote the values
from the right cell $I_{j+1}$ and from the left cell $I_j$.
Furthermore, the jump and the weighted average are denoted by
\begin{equation}
[v]_{j+\frac{1}{2}}=u_{j+\frac{1}{2}}^+-u_{j+\frac{1}{2}}^-,
\quad
v_{j-\frac{1}{2}}^{(\delta)}=
\delta v_{j+\frac{1}{2}}^++(1-\delta)v_{j+\frac{1}{2}}^-,
\end{equation}
where $\delta$ is the given weight.

Now we are ready to define the fully-discrete LDG scheme.
The numerical solutions satisfy
\begin{subequations}\label{scheme}
\begin{alignat}{1}
&\;
\left(\rho+\frac{\tau^{-\alpha}}{\Gamma(2-\alpha)}\right)
\int_{\Omega}u_h^nvdx
+\int_{\Omega}p_h^nv_xdx-\sum\limits_{j=1}^{N}
\left((\widehat{p_h^n}v^-)_{j+\frac{1}{2}}-(\widehat{p_h^n}v^+)_{j-\frac{1}{2}}\right)
\nonumber\\
=&\;
\frac{\tau^{-\alpha}}{\Gamma(2-\alpha)}
\left(\sum\limits_{i=1}^{n-1}(b_{i-1}-b_i)e^{-i\gamma \tau}
\int_{\Omega}u_h^{n-i}vdx+
b_{n-1}e^{-n\gamma \tau}\int_{\Omega}u_h^{0}vdx\right)+\int_{\Omega}f^{n}vdx,
\\
&\;
\int_{\Omega}p_h^nwdx+\int_{\Omega}u_h^nw_xdx-\sum\limits_{j=1}^{N}((\widehat{u_h^n}w^-)_{j+\frac{1}{2}}-(\widehat{u_h^n}w^+)_{j-\frac{1}{2}})=0,
\end{alignat}
\end{subequations}
for all test functions $v$ and $w\in V_h^k$.
The hat terms in \eqref{scheme} in the cell boundary terms from
integration by parts are the so-called numerical fluxes.
Instead of using the purely alternating
numerical fluxes as \cite{wei2,xu,xia,wei8}, the novelty of this paper is taking the following generalized alternating
numerical fluxes
\begin{equation}\label{flux1}
\widehat{u_h^n}=(u_h^n)^{(\delta)},
\quad
\widehat{p_h^n}=(p_h^n)^{(1-\delta)},
\end{equation}
with a given parameter $\delta\neq \frac{1}{2}$.
If taking $\delta=0$ or $1$, it will be purely alternating
numerical fluxes.
%

For the convenience of the notations and analysis,
we would like to introduce a compact form of the above scheme.
Adding up two equations in \eqref{scheme}, we have
\begin{equation}\label{scheme1}
\begin{split}
&\;
\left(\rho+\frac{\tau^{-\alpha}}{\Gamma(2-\alpha)}\right)
\int_{\Omega}u_h^nvdx+\int_{\Omega}p_h^nwdx+\mathcal{F}_{\Omega}(u_h^n,p_h^n;w, v)
\\
=&\;
\frac{(\tau^{-\alpha}}{\Gamma(2-\alpha)}
\left(\sum\limits_{i=1}^{n-1}(b_{i-1}-b_i)e^{-i\gamma \tau}\int_{\Omega}u_h^{n-i}vdx
+b_{n-1}e^{-n\gamma \tau}\int_{\Omega}u_h^{0}vdx\right)
+\int_{\Omega}f^{n}vdx,
\end{split}
\end{equation}
where
\begin{equation}
\begin{split}
\mathcal{F}_{\Omega}(u_h^n,p_h^n;w, v)=&\int_{\Omega}u_h^nw_xdx-\sum\limits_{j=1}^{N}(((u_h^n)^{(\delta)}w^-)_{j+\frac{1}{2}}-((u_h^n)^{(\delta)}w^+)_{j-\frac{1}{2}})\\
&+\int_{\Omega}p_h^nv_xdx-\sum\limits_{j=1}^{N}(((p_h^n)^{(1-\delta)}v^-)_{j+\frac{1}{2}}-((p_h^n)^{(1-\delta)}v^+)_{j-\frac{1}{2}}).
\end{split}
\end{equation}

\section{Stability analysis}

For the sake simplifying the notations and without lose of generality, we take $f=0$ in its numerical analysis. For the stability for the scheme (\ref{scheme}), we have the
following result.

\begin{thm}\label{sta}
For periodic or compactly supported boundary conditions,
the fully-discrete LDG scheme (\ref{scheme}) is
unconditionally stable,
and the numerical solution $u_h^n$ satisfies
\begin{equation}
\begin{split}
&\|u_h^{n}\|\leq
\|u_h^0\|,~~n=1,2\cdots, M.
\end{split}
\end{equation}
\end{thm}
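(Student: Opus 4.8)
The strategy is the standard energy method for fully discrete LDG schemes, combined with an induction on the time level $n$. First I would take $f=0$ as permitted, and test the compact scheme \eqref{scheme1} with $v=u_h^n$ and $w=p_h^n$. The key structural fact to establish is that the generalized flux bilinear form $\mathcal{F}_{\Omega}(u_h^n,p_h^n;p_h^n,u_h^n)$ vanishes under periodic boundary conditions. This is where the choice of the generalized alternating fluxes $\widehat{u_h^n}=(u_h^n)^{(\delta)}$ and $\widehat{p_h^n}=(p_h^n)^{(1-\delta)}$ pays off: after integrating $\int_\Omega u_h^n (p_h^n)_x\,dx+\int_\Omega p_h^n (u_h^n)_x\,dx$ by parts on each cell and collecting the interface terms, the complementary weights $\delta$ and $1-\delta$ make the telescoping sum over $j$ cancel exactly (the cross terms $[u_h^n p_h^n]$ at each node $x_{j+\frac12}$ recombine to zero), so $\mathcal{F}_{\Omega}=0$. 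I would state and prove this as a short lemma, since it is the one place the flux definition is actually used.

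With $\mathcal{F}_{\Omega}=0$, testing \eqref{scheme1} gives
\begin{equation*}
\left(\rho+\frac{\tau^{-\alpha}}{\Gamma(2-\alpha)}\right)\|u_h^n\|^2+\|p_h^n\|^2
=\frac{\tau^{-\alpha}}{\Gamma(2-\alpha)}\left(\sum_{i=1}^{n-1}(b_{i-1}-b_i)e^{-i\gamma\tau}\int_\Omega u_h^{n-i}u_h^n\,dx+b_{n-1}e^{-n\gamma\tau}\int_\Omega u_h^0 u_h^n\,dx\right).
\end{equation*}
Dropping the nonnegative terms $\rho\|u_h^n\|^2\ge 0$ and $\|p_h^n\|^2\ge 0$, applying Cauchy--Schwarz to each term on the right, and cancelling one factor of $\|u_h^n\|$ yields
\begin{equation*}
\|u_h^n\|\le \sum_{i=1}^{n-1}(b_{i-1}-b_i)e^{-i\gamma\tau}\|u_h^{n-i}\|+b_{n-1}e^{-n\gamma\tau}\|u_h^0\|.
\end{equation*}
Here I would use the sign properties of the coefficients: since $0<\alpha<1$, the sequence $b_i=(i+1)^{1-\alpha}-i^{1-\alpha}$ is positive and strictly decreasing, so $b_{i-1}-b_i>0$ and $b_{n-1}>0$; moreover these weights telescope, $\sum_{i=1}^{n-1}(b_{i-1}-b_i)+b_{n-1}=b_0=1$, and each damping factor $e^{-i\gamma\tau}\le 1$. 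Thus the right-hand side is a sub-convex combination of $\|u_h^0\|,\ldots,\|u_h^{n-1}\|$ with total weight $\le 1$.

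The conclusion then follows by induction on $n$: assuming $\|u_h^m\|\le\|u_h^0\|$ for all $m<n$, the displayed inequality gives
$\|u_h^n\|\le\left(\sum_{i=1}^{n-1}(b_{i-1}-b_i)e^{-i\gamma\tau}+b_{n-1}e^{-n\gamma\tau}\right)\|u_h^0\|\le\left(\sum_{i=1}^{n-1}(b_{i-1}-b_i)+b_{n-1}\right)\|u_h^0\|=\|u_h^0\|$,
with the base case $n=1$ reading $\|u_h^1\|\le b_0 e^{-\gamma\tau}\|u_h^0\|\le\|u_h^0\|$. Unconditional stability is immediate since no restriction linking $\tau$ and $h$ was ever needed. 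The main obstacle is the flux-cancellation lemma $\mathcal{F}_{\Omega}=0$: one must carefully track the $+/-$ traces and the weights $\delta,1-\delta$ through the per-cell integration by parts and the reindexing of the interface sums, and verify that periodicity (or compact support) kills the remaining boundary term at $a$ and $b$; everything after that is routine.
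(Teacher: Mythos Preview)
Your proposal is correct and follows essentially the same route as the paper's proof: test the compact form with $v=u_h^n$, $w=p_h^n$, establish the flux cancellation $\mathcal{F}_{\Omega}(u_h^n,p_h^n;p_h^n,u_h^n)=0$ via a per-cell integration by parts and summation, then apply Cauchy--Schwarz together with the telescoping identity $\sum_{i=1}^{n-1}(b_{i-1}-b_i)+b_{n-1}=b_0=1$ and $e^{-i\gamma\tau}\le 1$ inside an induction on $n$. Your write-up actually makes the sign and monotonicity properties of the $b_i$ (which the paper uses implicitly) more explicit, but the argument is the same.
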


\begin{proof}
Taking the test functions $v=u^n_h, w= p^n_h$ in (\ref{scheme1}),
we obtain
\begin{equation}\label{sta2}
\begin{split}
&\;
\left(\rho+\frac{\tau^{-\alpha}}{\Gamma(2-\alpha)}\right)
\|u_h^n\|^2+\|p_h^n\|^2+\mathcal{F}_{\Omega}(u_h^n,p_h^n;p^n_h, u^n_h)
\\
=&\;
\frac{\tau^{-\alpha}}{\Gamma(2-\alpha)}
\left(\sum\limits_{i=1}^{n-1}(b_{i-1}-b_i)e^{-i\gamma \tau}\int_{\Omega}u_h^{n-i}u^n_hdx
+b_{n-1}e^{-n\gamma \tau}\int_{\Omega}u_h^{0}u^n_hdx
\right).
\end{split}
\end{equation}

In each cell $I_{j}=[x_{j-\frac{1}{2}}, x_{j+\frac{1}{2}}]$, we have
\begin{equation}\label{one}
\begin{split}
\mathcal{F}_{I_{j}}(u_h^n,p_h^n;p^n_h, u^n_h)=&\int_{I_{j}}u_h^n(p^n_h)_xdx-((u_h^n)^{(\delta)}(p^n_h)^-)_{j+\frac{1}{2}}+((u_h^n)^{(\delta)}(p^n_h)^+)_{j-\frac{1}{2}}\\
&+\int_{I_j}p_h^n(u^n_h)_xdx-((p_h^n)^{(1-\delta)}(u^n_h)^-)_{j+\frac{1}{2}}+((p_h^n)^{(1-\delta)}(u^n_h)^+)_{j-\frac{1}{2}}\\
=&((u_h^n)^{-}(p^n_h)^-)_{j+\frac{1}{2}}-((u_h^n)^{+}(p^n_h)^+)_{j-\frac{1}{2}} -((u_h^n)^{(\delta)}(p^n_h)^-)_{j+\frac{1}{2}}\\
&+((u_h^n)^{(\delta)}(p^n_h)^+)_{j-\frac{1}{2}}-((p_h^n)^{(1-\delta)}(u^n_h)^-)_{j+\frac{1}{2}}+((p_h^n)^{(1-\delta)}(u^n_h)^+)_{j-\frac{1}{2}}.
\end{split}
\end{equation}

After some detailed analysis, and sum (\ref{one}) from $1$ to $N$ over $j$, we can get the following identity
\begin{equation}\label{st}
\mathcal{F}_{\Omega}(u_h^n,p_h^n;p^n_h, u^n_h)=0.
\end{equation}
Next, we prove Theorem \ref{sta} by mathematical introduction.

Notice the fact that
$e^{-i\gamma \tau}\leq 1$ for any $i\geq0$.
Let
$n=1$ in (\ref{sta2}), we can obtain
\begin{equation*}
\begin{split}
\left(\rho+\frac{\tau^{-\alpha}}{\Gamma(2-\alpha)}\right)
\|u_h^1\|^2+\|p_h^1\|^2
=&\;
\frac{\tau^{-\alpha}}{\Gamma(2-\alpha)}
b_{0}e^{-\gamma \tau}\int_{\Omega}u_h^{0}u^1_hdx
\\
\leq&\;
\frac{\tau^{-\alpha}}{\Gamma(2-\alpha)}
b_{0}\|u_h^{0}\|\|u^1_h\|.
\end{split}
\end{equation*}
Since $b_0=1$, we have
$\|u_h^1\|\leq \|u_h^0\|$.

Suppose that we have proved for the given integer $P$
the following inequalities
\begin{equation*}
\|u_h^m\|\leq \|u_h^0\|, \quad m=1,2,\cdots,P.
\end{equation*}
Letting $n=P+1$ and taking the test functions
$v=u_h^{P+1}$ and $w=p_h^{P+1}$
in \eqref{sta2}, we can obtain
\begin{equation*}
\begin{split}
&\;
\left(\rho+\frac{\tau^{-\alpha}}{\Gamma(2-\alpha)}\right)
\|u_h^{P+1}\|^2
+\|p_h^{P+1}\|^2
\\
\leq&\;
\frac{\tau^{-\alpha}}{\Gamma(2-\alpha)}
\left(\sum\limits_{i=1}^{n-1}
(b_{i-1}-b_i)\|u_h^{P+1-i}\|\|u^{P+1}_h\|
+b_{P}\|u_h^{0}\|\|u^{P+1}_h\|
\right)
\\
\leq&\;
\frac{\tau^{-\alpha}}{\Gamma(2-\alpha)}
\left(\sum\limits_{i=1}^{P}(b_{i-1}-b_i)+b_P\right)
\|u_h^{0}\|\|u_h^{P+1}\|
\\
=&\;
\frac{\tau^{-\alpha}}{\Gamma(2-\alpha)}
b_0\|u_h^{0}\|\|u_h^{P+1}\|.
\end{split}
\end{equation*}
Hence, we can get the following inequality easily
\begin{equation*}
\|u_h^{P+1}\|\leq \|u_h^0\|,
\end{equation*}
which implies the conclusion of this theorem.
\end{proof}

\section{Error estimate}

In this section we present the error estiamte.
To do that, let us firstly recall some important results.

For any periodic function $\omega$ defined on $[a,b]$,
the generalized Gauss-Radau projection \cite{meng16,zhang17},
denoted by $\mathcal{P_\delta\omega}$,
is the unique element in $V_h$.
Let $\omega^e=\mathcal{P_\delta}\omega-\omega$
be the projection error.
When $\delta\neq \frac{1}{2}$, it satisfies
for $j=1,2,\ldots,N$, that
\begin{equation}\label{p1}
\int_{I_j}\omega^evdx=0,
\quad \forall v\in P^{k-1}(I_j),
\quad
\mbox{and  }
(\omega^e)^{(\delta)}_{j+\frac{1}{2}}=0.
\end{equation}
We have the following conclusion \cite{zhang17}.

\begin{Lemma}\label{projection111}
Let $\delta\neq \frac{1}{2}$.
If $\omega\in H^{s+1}[a,b]$, there holds
\begin{equation}
\|\omega^e\|+h^{\frac{1}{2}}\|\omega^e\|_{L^2(\Gamma_h)}
\leq
Ch^{min(k+1,s+1)}\|\omega\|_{s+1},
\end{equation}
where the bounding constant $C>0$ is independent of $h$
and $\omega$. Here $\Gamma_h$
denotes the set of boundary points of all elements $I_j$, and
$$
\|\omega^e\|_{L^2(\Gamma_h)}=
\left(
\frac{1}{2}\sum\limits_{i=1}^{N}
[((\omega^e)^+)^2_{i-\frac{1}{2}}+((\omega^e)^-)^2_{i+\frac{1}{2}}]
\right)^{\frac{1}{2}}.
$$
\end{Lemma}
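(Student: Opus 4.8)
The plan is to prove the estimate in two steps: first show that $\mathcal{P}_\delta$ is a well-defined operator on $V_h^k$ --- this is exactly where the hypothesis $\delta\neq\frac12$ enters --- and then bound $\omega^e$ by comparing $\mathcal{P}_\delta\omega$ with the piecewise $L^2$-projection of $\omega$. For well-posedness, I would note that the $k$ moment conditions in \eqref{p1} force the $P^{k-1}(I_j)$-component of $\mathcal{P}_\delta\omega|_{I_j}$ to be the local $L^2$-projection $\Pi^j_{k-1}\omega$, leaving one scalar degree of freedom per cell; writing $\ell_j$ for the degree-$k$ Legendre polynomial on $I_j$ normalized by $\ell_j(x_{j+\frac12})=1$ (so that $\ell_j(x_{j-\frac12})=(-1)^k$), we have $\mathcal{P}_\delta\omega|_{I_j}=\Pi^j_{k-1}\omega+c_j\ell_j$. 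Substituting into the interface conditions $(\omega^e)^{(\delta)}_{j+\frac12}=0$ and using the continuity of $\omega$ turns the definition into a linear system $A\mathbf{c}=\mathbf{g}$ with $\mathbf{c}=(c_1,\dots,c_N)^{\top}$, where (under periodic boundary conditions) $A=(1-\delta)I+(-1)^k\delta S$ with $S$ a cyclic shift. Its eigenvalues are $(1-\delta)+(-1)^k\delta\zeta$ as $\zeta$ ranges over the $N$-th roots of unity, each of modulus at least $\big||1-\delta|-|\delta|\big|$, which is strictly positive precisely because $\delta\neq\frac12$. Hence $A$ is invertible and, crucially, $\|A^{-1}\|$ is bounded by a constant depending only on $\delta$, independent of $N$ (hence of $h$).

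For the error bound, let $\omega_I\in V_h^k$ be the piecewise $L^2$-projection of $\omega$, so that by standard approximation theory $\|\omega-\omega_I\|+h^{\frac12}\|\omega-\omega_I\|_{L^2(\Gamma_h)}\le Ch^{\min(k+1,s+1)}\|\omega\|_{s+1}$, and set $e_h=\mathcal{P}_\delta\omega-\omega_I$. Since $\mathcal{P}_\delta\omega$ and $\omega_I$ share the same $P^{k-1}(I_j)$-moments of $\omega$, on each cell $e_h|_{I_j}=d_j\ell_j$, and the relations $(\mathcal{P}_\delta\omega)^{(\delta)}_{j+\frac12}=\omega(x_{j+\frac12})$ show that $\mathbf{d}$ solves $A\mathbf{d}=\boldsymbol\eta$ with $\eta_{j+\frac12}=\omega(x_{j+\frac12})-(\omega_I)^{(\delta)}_{j+\frac12}$. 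A trace inequality on each element combined with the approximation bounds gives $|\eta_{j+\frac12}|\le Ch^{\min(k+1,s+1)-\frac12}\|\omega\|_{H^{s+1}(I_j\cup I_{j+1})}$, so $\|\boldsymbol\eta\|_{\ell^2}\le Ch^{\min(k+1,s+1)-\frac12}\|\omega\|_{s+1}$. By the previous step $\|\mathbf{d}\|_{\ell^2}\le\|A^{-1}\|\,\|\boldsymbol\eta\|_{\ell^2}\le Ch^{\min(k+1,s+1)-\frac12}\|\omega\|_{s+1}$, and since $\|\ell_j\|_{L^2(I_j)}^2=h_j/(2k+1)$ this yields $\|e_h\|\le Ch^{\frac12}\|\mathbf{d}\|_{\ell^2}\le Ch^{\min(k+1,s+1)}\|\omega\|_{s+1}$; an inverse inequality $\|v\|_{L^2(\Gamma_h)}\le Ch^{-\frac12}\|v\|$ on $V_h^k$ then gives $\|e_h\|_{L^2(\Gamma_h)}\le Ch^{\min(k+1,s+1)-\frac12}\|\omega\|_{s+1}$. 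The triangle inequality applied to $\omega^e=e_h+(\omega_I-\omega)$ finishes the proof.

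The only genuinely delicate point is the uniform-in-$h$ invertibility of $A$ in the first step; once that is secured, the rest is routine, relying only on the standard piecewise $L^2$ approximation estimates, trace and inverse inequalities, and the Legendre normalization $\int_{-1}^1P_k^2=\tfrac{2}{2k+1}$. An alternative to the first step would be a Bramble--Hilbert argument on a two-cell reference patch, but because the generalized projection couples neighbouring elements through the weight $\delta$, the circulant-recurrence viewpoint is the more transparent route here.
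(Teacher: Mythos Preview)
Your argument is correct and essentially complete. The well-posedness step via the circulant system $A=(1-\delta)I+(-1)^k\delta S$ is right, the lower bound $\big||1-\delta|-|\delta|\big|$ on the eigenvalue moduli is exactly where $\delta\neq\frac12$ is used, and the comparison with the local $L^2$-projection together with trace and inverse inequalities closes the estimate cleanly.

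However, note that the paper does \emph{not} give its own proof of this lemma: it is quoted from Cheng--Meng--Zhang (Math.\ Comp.\ 86 (2017), reference \cite{zhang17} in the paper), with the generalized Gauss--Radau projection itself introduced in Meng--Shu--Wu \cite{meng16}. Your proof is in fact a faithful reconstruction of the argument in those references: the decomposition $\mathcal{P}_\delta\omega|_{I_j}=\Pi_{k-1}^j\omega+c_j\ell_j$, the circulant analysis of the resulting interface system under periodic boundary conditions, and the error splitting through the full local $L^2$-projection are all the same ingredients used there. So while there is nothing to compare against in the present paper, your write-up matches the cited source and could stand on its own as a self-contained proof.
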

Also we will use the following conclusion \cite{zzz,fe15}.

\begin{Lemma}\label{tf3}
If $\psi^n\geq 0,n=1,2,\cdots,N, \psi^0=0, \chi>0$, $d_i>0, i=1,2,\dots,l$,
\begin{equation}
\begin{split}
\psi^n
\leq&\sum\limits_{k=1}^{n-1} (b_{k-1}-b_{k})\psi_{n-k}
+\chi,
\end{split}
\end{equation}
then we have
\begin{equation*}
\psi^n\leq C(\tau)^{-\alpha} \chi,
\end{equation*}
where $C$ is a positive constant independent of $h$ and $\tau$.
\end{Lemma}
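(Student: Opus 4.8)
The plan is to first establish the sharper, step-size-free bound $\psi^n \le \chi/b_{n-1}$ for every $n \ge 1$, and then convert it into the stated estimate by elementary bounds on $b_{n-1}$ together with $n\tau \le T$. Throughout I use only that $b_i = (i+1)^{1-\alpha}-i^{1-\alpha}$ is a positive, strictly decreasing sequence with $b_0 = 1$, which holds because $0<\alpha<1$ makes $x\mapsto x^{1-\alpha}$ concave and increasing on $[0,\infty)$; the hypotheses $d_i>0$ are not needed and may be dropped (they are vestigial from more general versions of the inequality in the cited works).

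For the first step I would argue by strong induction on $n$. The base case $n=1$ is immediate: the sum in the hypothesis is empty, so $\psi^1 \le \chi = \chi/b_0$. For the inductive step, assume $\psi^m \le \chi/b_{m-1}$ for $1 \le m \le n-1$. Inserting this into the recursion gives
\begin{equation*}
\psi^n \le \chi\sum_{k=1}^{n-1}\frac{b_{k-1}-b_k}{b_{n-k-1}} + \chi.
\end{equation*}
As $k$ runs from $1$ to $n-1$, the index $n-k-1$ runs from $n-2$ down to $0$, so $b_{n-k-1}\ge b_{n-2}\ge b_{n-1}$ and hence $1/b_{n-k-1}\le 1/b_{n-1}$. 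Pulling this factor out and telescoping, $\sum_{k=1}^{n-1}(b_{k-1}-b_k)=b_0-b_{n-1}=1-b_{n-1}$, we obtain $\psi^n \le \chi(1-b_{n-1})/b_{n-1}+\chi=\chi/b_{n-1}$, which closes the induction. (Equivalently one may set $\Psi^n=\max_{1\le m\le n}\psi^m$ and observe that each $\psi^m$ with $m\le n$ satisfies $\psi^m\le(1-b_{m-1})\Psi^n+\chi\le(1-b_{n-1})\Psi^n+\chi$, whence $\Psi^n\le(1-b_{n-1})\Psi^n+\chi$ and so $\Psi^n\le\chi/b_{n-1}$.)

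For the second step, the mean value theorem applied to $g(x)=x^{1-\alpha}$ yields $b_{n-1}=g(n)-g(n-1)=(1-\alpha)\xi^{-\alpha}$ for some $\xi\in(n-1,n)$, and since $x\mapsto x^{-\alpha}$ is decreasing we get $b_{n-1}\ge(1-\alpha)n^{-\alpha}$. Therefore $\psi^n\le\chi/b_{n-1}\le n^\alpha\chi/(1-\alpha)$. Since the time levels obey $n\le M=T/\tau$, we have $n^\alpha\le T^\alpha\tau^{-\alpha}$, and hence $\psi^n\le\frac{T^\alpha}{1-\alpha}\tau^{-\alpha}\chi$, which is the assertion with $C=T^\alpha/(1-\alpha)$, depending only on $T$ and $\alpha$ and in particular independent of $h$ and $\tau$.

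I do not anticipate a genuine obstacle. The one place to be careful is the monotonicity ordering $b_{n-k-1}\ge b_{n-1}$ and the positivity of the weights $b_{k-1}-b_k$, which is precisely where $0<\alpha<1$ is used, together with the index bookkeeping in the telescoping sum $\sum_k(b_{k-1}-b_k)\psi^{n-k}$; everything else reduces to a one-line calculus estimate on $b_{n-1}$.
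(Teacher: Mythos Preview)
Your argument is correct. The paper itself does not supply a proof of this lemma; it merely cites the references \cite{zzz,fe15} and quotes the conclusion. What you have written is precisely the standard proof found in that literature: the strong-induction bound $\psi^n\le \chi/b_{n-1}$ followed by the elementary lower bound $b_{n-1}\ge(1-\alpha)n^{-\alpha}$ and $n\tau\le T$. Your observation that the hypotheses $d_i>0$ are vestigial is also accurate; they play no role here and are remnants of a more general multi-term formulation in \cite{fe15}. The only cosmetic point is that the statement in the paper uses $N$ for the range of $n$, which clashes with the spatial-mesh notation; your use of $n\le M=T/\tau$ is the intended reading.
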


In the present paper we use the notation $C$ to denote
a positive constant which may have a different value
in each occurrence.
The usual notation of norms in Sobolev spaces will be used.
Denote by $(\cdot,\cdot)_{D}$ the inner product on $L^2(D)$,
with the associated norm by $\|\cdot\|_{D}$.
If $D=\Omega$, we drop $D$.

Now we are ready to present the following estimate.

\begin{thm}\label{errorresult} Let $u(x,t_n)$ be the exact solution of the
problem (\ref{question}), which is sufficiently smooth with bounded
derivatives, Let $u_h^n$ be the numerical solution of the fully
discrete LDG scheme (\ref{scheme}), then there holds the following
error estimates
\begin{equation*}\label{err}
\|u(x,t_n)-u_h^n\|\leq C(h^{k+1}+\tau^{2-\alpha}),
\end{equation*}
where $C$ is a constant depending on $u, T, \alpha$.
\end{thm}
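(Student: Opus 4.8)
\medskip\noindent\emph{Plan of proof.}
The plan is to run the standard local discontinuous Galerkin energy argument and close it with the discrete fractional Gr\"onwall inequality of Lemma~\ref{tf3}. First I would establish the consistency (Galerkin orthogonality) relation: since the exact solution $u$ is continuous, its jumps vanish and every numerical flux reduces to the trace value, so substituting $u$ and $p=u_x$ into the weak form \eqref{scheme1} reproduces it identically except for the time-discretization residual $\mu^n$ arising from the approximation of the tempered derivative (for which we already know $\|\mu^n\|\le C\tau^{2-\alpha}$). Subtracting \eqref{scheme1} then yields, for all $v,w\in V_h^k$, an error equation in $e_u^n=u(\cdot,t_n)-u_h^n$ and $e_p^n=p(\cdot,t_n)-p_h^n$ whose ``time'' part is the discrete tempered difference operator $\Phi^n[\cdot]$ of Section~2 applied to $e_u$, whose ``spatial'' part is $\mathcal F_\Omega(e_u^n,e_p^n;w,v)$, and whose right-hand side is $-(\mu^n,v)$.

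Next I would split the errors using \emph{two} generalized Gauss--Radau projections matched to the two fluxes: $e_u^n=\eta_u^n+\zeta_u^n$ with $\eta_u^n=u(\cdot,t_n)-\mathcal P_\delta u(\cdot,t_n)$, and $e_p^n=\eta_p^n+\zeta_p^n$ with $\eta_p^n=p(\cdot,t_n)-\mathcal P_{1-\delta}p(\cdot,t_n)$, where $\zeta_u^n,\zeta_p^n\in V_h^k$; I would also take $u_h^0=\mathcal P_\delta u_0$ so that $\zeta_u^0=0$. The point of this choice is that, by the orthogonality and interface properties \eqref{p1} (used with weight $\delta$ for the terms carrying $(\cdot)^{(\delta)}$ and with weight $1-\delta$ for those carrying $(\cdot)^{(1-\delta)}$) and the additivity of $\mathcal F_\Omega$ in its first two arguments, all contributions of $\eta_u^n,\eta_p^n$ to the spatial form vanish; combined with the identity \eqref{st} (whose derivation in fact gives $\mathcal F_\Omega(\phi,\psi;\psi,\phi)=0$ for every $\phi,\psi\in V_h^k$), testing with $v=\zeta_u^n$, $w=\zeta_p^n$ makes $\mathcal F_\Omega(e_u^n,e_p^n;\zeta_p^n,\zeta_u^n)=\mathcal F_\Omega(\zeta_u^n,\zeta_p^n;\zeta_p^n,\zeta_u^n)=0$. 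The error equation then collapses to the purely $L^2$ energy identity
\begin{align*}
&\left(\rho+\frac{\tau^{-\alpha}}{\Gamma(2-\alpha)}\right)\|\zeta_u^n\|^2+\|\zeta_p^n\|^2\\
&\qquad=\frac{\tau^{-\alpha}}{\Gamma(2-\alpha)}\sum_{i=1}^{n-1}(b_{i-1}-b_i)e^{-i\gamma\tau}(\zeta_u^{n-i},\zeta_u^n)-(\Phi^n[\eta_u],\zeta_u^n)-\rho(\eta_u^n,\zeta_u^n)-(\eta_p^n,\zeta_p^n)-(\mu^n,\zeta_u^n),
\end{align*}
where $\Phi^n[\eta_u]$ denotes the discrete tempered difference operator applied to the projection error.

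The crucial estimate is $\|\Phi^n[\eta_u]\|\le Ch^{k+1}$ \emph{with no negative power of $\tau$}: since $\eta_u(\cdot,t)=u(\cdot,t)-\mathcal P_\delta u(\cdot,t)$ is as smooth in $t$ as $u$ and $\|\partial_t^m\eta_u(\cdot,t)\|\le Ch^{k+1}$ uniformly for $m=0,1,2$ by Lemma~\ref{projection111}, the discrete operator $\Phi^n[\eta_u]$ differs from the continuous tempered derivative ${}_0^{C}D_t^{\alpha,\gamma}\eta_u(\cdot,t_n)$ by $O(\tau^{2-\alpha}h^{k+1})$, and the latter is bounded by $CT^{1-\alpha}h^{k+1}/\Gamma(2-\alpha)$ straight from its integral definition. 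Using this, Cauchy--Schwarz and Young's inequality --- absorbing the terms linear in $\|\zeta_u^n\|$ with the help of $\rho>0$, absorbing $\tfrac12\|\zeta_p^n\|^2$ against $\|\eta_p^n\|^2=O(h^{2(k+1)})$, and absorbing the diagonal contribution of the history sum into the $\tau^{-\alpha}/\Gamma(2-\alpha)$ factor on the left --- together with $\sum_{i=1}^{n-1}(b_{i-1}-b_i)+b_{n-1}=b_0=1$, $0<e^{-i\gamma\tau}\le1$ and $b_{i-1}-b_i>0$, I would obtain, after dividing by $\rho+\tau^{-\alpha}/\Gamma(2-\alpha)$, the recursion
\[
\|\zeta_u^n\|^2\le\sum_{i=1}^{n-1}(b_{i-1}-b_i)\,\|\zeta_u^{n-i}\|^2+C\tau^{\alpha}\left(h^{k+1}+\tau^{2-\alpha}\right)^2,\qquad \zeta_u^0=0,
\]
the factor $\tau^{\alpha}$ being produced by that division. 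Lemma~\ref{tf3} with $\psi^n=\|\zeta_u^n\|^2$ then gives $\|\zeta_u^n\|^2\le C\tau^{-\alpha}\cdot\tau^{\alpha}(h^{k+1}+\tau^{2-\alpha})^2=C(h^{k+1}+\tau^{2-\alpha})^2$, and the triangle inequality with $\|\eta_u^n\|\le Ch^{k+1}$ (Lemma~\ref{projection111}) yields the claimed bound.

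I expect the main obstacle to be precisely the treatment of $\Phi^n[\eta_u]$: a naive term-by-term bound of the time--projection contribution carries a factor $\tau^{-\alpha}$ and would leave a spurious $\tau^{-\alpha}h^{k+1}$ in the final estimate, so one really must recognize this quantity as the discrete tempered fractional derivative of a time-smooth, uniformly $O(h^{k+1})$ function and exploit its boundedness. A secondary point is keeping careful track of the constants in the absorption step (in particular using $\rho>0$ to absorb the data terms without eating the whole $\tau^{-\alpha}$ on the left) so that the recursion comes out in the exact normalized form required by Lemma~\ref{tf3} and the $\tau^{-\alpha}$ it introduces is cancelled by the $\tau^{\alpha}$ gained from the division; the auxiliary variable $p$ is merely carried along through $\|\zeta_p^n\|^2\ge0$ and Young's inequality on $(\eta_p^n,\zeta_p^n)$.
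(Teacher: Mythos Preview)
Your proposal is correct and follows essentially the same route as the paper: the same matched generalized Gauss--Radau projections $\mathcal P_\delta$ for $u$ and $\mathcal P_{1-\delta}$ for $p$, the same cancellations $\mathcal F_\Omega(\eta_u^n,\eta_p^n;\zeta_p^n,\zeta_u^n)=0$ and $\mathcal F_\Omega(\zeta_u^n,\zeta_p^n;\zeta_p^n,\zeta_u^n)=0$, the same careful bound on the time--projection contribution to avoid a spurious $\tau^{-\alpha}$, and closure by the discrete fractional Gr\"onwall Lemma~\ref{tf3}. The only cosmetic differences are that the paper derives the recursion for $\|\xi_u^n\|$ rather than $\|\zeta_u^n\|^2$ (it divides through by $\|\xi_u^n\|$ instead of applying Young's inequality), and it bounds the $\eta_u$--history term by telescoping it into backward time differences $\tau\sum_{i=0}^{n-1}b_i\,\partial_t(e^{-i\gamma\tau}\eta_u^{n-i})$ and using $\sum_i b_i\le n^{1-\alpha}$, whereas you identify the whole expression as the discrete tempered derivative of an $O(h^{k+1})$ function---both yield the same $O(\tau^\alpha h^{k+1})$ contribution to the recursion.
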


\begin{proof}
Consider the seperation of numerical error in the form
\begin{equation}\label{errnotation}
\begin{split}
&e_u^n=u(x,t_n)-u_h^n=\xi_u^n-\eta_u^n,
\quad
\xi_u^n=\mathcal{P}_{\delta}e_u^n,
\quad
\eta_u^n=\mathcal{P}_{\delta}u(x,t_n)-u(x,t_n),
\\
&e_p^n=p(x,t_n)-p_h^n=\xi_p^n-\eta_p^n,
\quad
\xi_p^n=\mathcal{P}_{1-\delta}e_p^n,
\quad
\eta_p^n=\mathcal{P}_{1-\delta}p(x,t_n)-p(x,t_n).
\end{split}
\end{equation}
Here $\eta_u^n$ and $\eta_p^n$ have been estimated by
Lemma \ref{projection111}.
In what following we are going to estimate
$\xi_u^n$ and $\xi_p^n$.

Since the fluxes \eqref{flux1} are consistent,
we can obtain the following error equation
\begin{equation}\label{err1}
\begin{split}
&\;
\left(\rho+\frac{\tau^{-\alpha}}{\Gamma(2-\alpha)}\right)
\int_{\Omega}e_u^nvdx+\int_{\Omega}\mu^{n}(x)vdx+
\int_{\Omega}e_p^nwdx
+\mathcal{F}_{\Omega}(e_u^n,e_p^n;v, w)
\\
&\;\hspace{1cm}
-\frac{\tau^{-\alpha}}{\Gamma(2-\alpha)}
\left(\sum\limits_{i=1}^{n-1}(b_{i-1}-b_i)e^{-i\gamma \tau}\int_{\Omega}e_u^{n-i}vdx+b_{n-1}e^{-n\gamma \tau}\int_{\Omega}e_u^{0}vdx\right)=0.
\end{split}
\end{equation}
Based on the error decomposition (\ref{errnotation}),
and taking the test functions $v=\xi_u^n$ and
$w=\xi_p^n$ in \eqref{err1}, we have the following
error equations
\begin{equation}\label{err2}
\begin{split}
&\;
\left(\rho+\frac{\tau^{-\alpha}}{\Gamma(2-\alpha)}\right)
\|\xi_u^n\|^2
+\|\xi_p^n\|^2+
+\mathcal{F}_{\Omega}(\xi_u^n,\xi_p^n;\xi_u^n, \xi_p^n)
\\
=&\;
\frac{\tau^{-\alpha}}{\Gamma(2-\alpha)}
\left(\sum\limits_{i=1}^{n-1}(b_{i-1}-b_i)e^{-i\gamma \tau}\int_{\Omega}\xi_u^{n-i}\xi_u^ndx
+b_{n-1}e^{-n\gamma \tau}\int_{\Omega}\xi_u^{0}\xi_u^ndx\right)
\\
&\;
+\left(\rho+\frac{\tau^{-\alpha}}{\Gamma(2-\alpha)}\right)
\int_{\Omega}\eta_u^n\xi_u^ndx
-\int_{\Omega}\mu^{n}(x)\eta_u^ndx
+\int_{\Omega}\eta_p^n\xi_p^ndx
+\mathcal{F}(\eta_u^n,\eta_p^n;\xi_u^n, \xi_p^n)
\\
&\;
-\frac{\tau^{-\alpha}}{\Gamma(2-\alpha)}
\left(\sum\limits_{i=1}^{n-1}(b_{i-1}-b_i)e^{-i\gamma \tau}\int_{\Omega}\eta_u^{n-i}\xi_u^ndx
+b_{n-1}e^{-n\gamma \tau}\int_{\Omega}\eta_u^0\xi_u^ndx\right).
\end{split}
\end{equation}
By the definitions of $\mathcal{P}_{\delta}$ for $u$ and
$\mathcal{P}_{1-\delta}$ for $p$, it is easy to see that
$$
\mathcal{F}_{\Omega}(\eta_u^n,\eta_p^n;\xi_u^n, \xi_p^n))=0.
$$
Based on the stability result (\ref{st}), and notice $\rho>0, \mathcal{P}_{\delta}e_u^{0}=0$, from (\ref{err2}) we can have
\begin{equation}\label{z1}
\begin{split}
\|\xi_u^n\|^2&
+\beta\|\xi_p^n\|^2
=
\sum\limits_{i=1}^{n-1}(b_{i-1}-b_i)e^{-i\gamma \tau}
\int_{\Omega}\xi_u^{n-i}\xi_u^ndx
-\beta\int_{\Omega}\mu^{n}(x)\xi_u^ndx
+\beta\int_{\Omega}\eta_p^n\xi_p^ndx+\mathcal{G},
\end{split}
\end{equation}
where $\beta=\tau^{\alpha}\Gamma(2-\alpha), $ and
\begin{equation*}
\begin{split}
\mathcal{G}=&\;
\rho\beta\int_{\Omega}\eta_u^n\xi_u^ndx
+\int_{\Omega}\eta_u^n\xi_u^ndx
\\
&\;
-\left(
\sum\limits_{i=1}^{n-1}(b_{i-1}-b_i)e^{-i\gamma \tau}
\int_{\Omega}\eta_u^{n-i}\xi_u^nd
+b_{n-1}e^{-n\gamma \tau}\int_{\Omega}\eta_u^n\xi_u^ndx
\right)
\\
=&\;
\rho\beta\int_{\Omega}\eta_u^n\xi_u^ndx
+\tau \sum\limits_{i=0}^{n-1}b_i\int_{\Omega}
\partial_te^{-i\gamma \tau}\eta_u^{n-i}\xi_u^ndx,
\end{split}
\end{equation*}
here the simplified notation is used,
$$
\partial_t\varphi(x,t_{k})=\frac{\varphi(x,t_{k})-\varphi(x,t_{k-1})}{\tau}.
$$
With the help of
\begin{equation*}
\begin{split}
\|\partial_te^{-i\gamma \tau}\eta_u^{n-i}\|
\leq&\;
\|\frac{e^{-T\gamma}}{\tau}\int_{t_{n-i}}^{t_{n-i-1}}
\|\frac{\partial}{\partial t}\eta_u(x,t))dt\|
\leq Ch^{k+1},
\end{split}
\end{equation*}
then we can get
\begin{equation}\label{z11}
\begin{split}
\|\mathcal{G}\|
\leq&\;
\tau \sum\limits_{i=0}^{n-1}b_i
\|\partial_te^{-i\gamma \tau}\eta_u^{n-i}\|\|\xi_u^n\|
+Ch^{k+1}\tau^{\alpha}\|\xi_u^n\|
\\
\leq&\;
Ch^{k+1}\tau \sum\limits_{i=0}^{n-1}b_i\|\xi_u^n\|
+Ch^{k+1}\tau^{\alpha}\|\xi_u^n\|
\\
\leq&\;
Ch^{k+1}\tau n^{1-\alpha}\|\xi_u^n\|
+Ch^{k+1}\tau^{\alpha}\|\xi_u^n\|
\\
\leq&\;
CT^{1-\alpha}h^{k+1}\tau^{\alpha}\|\xi_u^n\|.
\end{split}
\end{equation}
Similarly we have
\begin{equation}\label{z12}
\begin{split}
|-\beta\int_{\Omega}\gamma^{n}(x)\xi_u^ndx|
\leq
C\tau^2\|\xi_u^n\|.
\end{split}
\end{equation}
An application of Young inequality yields
\begin{equation}\label{z13}
\begin{split}
|\beta\int_{\Omega}\eta_p^n\xi_p^ndx|
\leq&\;
\frac{\beta^2}{4\varepsilon}\|\eta_p^n\|^2
+\varepsilon\|\xi_p^n\|^2,
\end{split}
\end{equation}
where $\varepsilon$ is a small constant.

By mean of
$e^{-i\gamma \tau}\leq 1$,
and (\ref{z11}), (\ref{z12}), (\ref{z13}), from  (\ref{z1}) we can obtain
\begin{equation}\label{z1}
\|\xi_u^n\|
\leq
\sum\limits_{i=1}^{n-1}(b_{i-1}-b_i)\|\xi_u^{n-i}\|
+C(\tau^2+h^{k+1}(\tau)^{\alpha}).
\end{equation}
By Lemma \ref{tf3}, we can obtain the following
result immediately
$$
\|\xi_u^n\|\leq C(h^{k+1}+\tau^{2-\alpha}).
$$

Finally, Theorem \ref{errorresult} follows by
the triangle inequality
and Lemma \ref{projection111}.
\end{proof}

\section {\large Numerical examples}
In this section some numerical examples are carried out to
illustrate the
accuracy and capability of the method.
With the help
of successive mesh refinements we have verified that
the scheme is numerically convergent.

\textbf{Example 4.1.}
Considering tempered fractional equation (\ref{question}) in
$\Omega=[0,1]$ with $\rho=0$,
the corresponding forcing term $f(x,t)$ is of the form
\begin{equation}\label{s11}
f(x,t)=\frac{2e^{-\gamma t} t^{2-\alpha}}{\Gamma(3-\alpha)}
\sin(2\pi x)+4 \pi^2t^2e^{-\gamma t}\sin(2\pi x),
\end{equation}
then the exact solution is
$u(x,t)=e^{-\gamma t}t^2\sin(2\pi x)$.
The time step is $\tau=1/M$ and $N$ is the number of the mesh
in space.

First, Spatial accuracy of the scheme (\ref{scheme}) is
tested by taking a sufficiently small
step sizes $\tau=1/1000$ in time. We divide the space into
$N$ elements to form the uniform mesh and
then randomly perturb the coordinates by $10\%$ to construct
the nonuniform mesh.
We list both the $L^2$-norm and $L^\infty$-norm errors,
and the numerical orders of accuracy at time $T=1$ for several
$\alpha$s and $\delta$s,
for both the uniform and nonuniform meshes in Table 1-4.
One can find that the errors attain $(k+1)$-th order of accuracy
for piecewise $P^k$ polynomials.

Secondly, we will test the temporal accuracy of the scheme (\ref{scheme}) with generalized alternating
numerical fluxes (\ref{flux1}).  We take a sufficiently small step
sizes $h=1/200$ so that the space discretization
error is negligible as compared with the time error.
The expected $(2-\alpha)$-th order convergence of
scheme (\ref{scheme}) in time could be seen in Table 5.

\begin{table}[htbp]
\tabcolsep 0pt \caption{Spatial accuracy test on uniform meshes with generalized alternating
numerical fluxes when $\delta=0.3, \gamma=2, M=10^3, T=1$.}
 \vspace*{-24pt}
\begin{center}
\def\temptablewidth{1.0\textwidth}
{\rule{\temptablewidth}{1pt}}
\begin{tabular*}{\temptablewidth}{@{\extracolsep{\fill}}|c|c|ccccccc|}
 $\delta$      &$\alpha$       &  $P^k$  &  $N$ &   $L^2$-error& order & $L^\infty$-error & order&\\
 \cline{1-8}
&&& 5&3.600997655347402E-002 & - &8.470765811325168E-002& -&\\
&&& 10&1.751495701129877E-002& 1.04 &4.247840062543977E-002 &1.00&\\
&&$P^0$ & 20&8.698273697461307E-003& 1.01 &2.125369555415509E-002 & 1.00 &\\
&&& 40&4.341798450317296E-003& 1.00 &1.062863299729018E-002&1.00&\\
 \cline{3-8}
 &&& 5&9.943585411573410E-003& - &2.757075846380118E-002& -&\\
&$\alpha=0.1$&& 10&3.566273983250412E-003& 1.48 &1.080003816776576E-002&1.35&\\
&&$P^1$ &20&1.086919816367809E-003&1.71 &3.420222759716512E-003& 1.66& \\
&&& 40&2.902709222939700E-004& 1.90 &9.193407604523585E-004&1.90&\\
 \cline{3-8}
&& & 5&7.990640423350242E-004 & - &3.417434847568336E-003& -&\\
&&& 10&8.626266947702263E-005& 3.21 &3.694158436061931E-004&3.20&\\
&&$P^2$ & 20&1.044514634698983E-005& 3.04 &4.454609652087860E-005 &3.05& \\
&&& 40&1.296172202016491E-006& 3.01 &5.508580340890919E-006&3.01&\\
 \cline{2-8}
&&& 5&3.594827779072097E-002 & - &8.455751510071857E-002& -&\\
&&& 10&1.750788942725992E-002& 1.04 &4.246067080980019E-002 &1.00&\\
&&$P^0$ & 20&8.697412259699867E-003& 1.01 &2.125151852856288E-002 & 1.00& \\
&&& 40&4.341693080827747E-003& 1.00 &1.062836627483681E-002&1.00&\\
 \cline{3-8}
& && 5&9.921671652323443E-003& - &2.759417252018699E-002& -&\\
&$\alpha=0.6$&& 10&3.563738012224470E-003& 1.48 &1.080391242567882E-002&1.35&\\
&&$P^1$ &20&1.086707146130006E-003&1.71 &3.420472591014495E-003& 1.66& \\
$\delta=0.3$&&& 40&2.902558645272467E-004& 1.90 &9.194266256405403E-004&1.90&\\
 \cline{3-8}
 &&& 5&7.988664990181788E-004& - &3.416510378879402E-003& -&\\
&&& 10&8.626077461730029E-005& 3.21 &3.694001433403462E-004&3.20&\\
&&$P^2$ & 20&1.044545573669924E-005& 3.04 &4.454547869921856E-005 &3.05 &\\
&&& 40&1.298706242251523E-006& 3.01 &5.508452148315928E-006&3.01&\\
 \cline{2-8}
&&& 5&3.592329974624144E-002 & - &8.449616965675322E-002& -&\\
&&& 10&1.750510512702476E-002& 1.04 &4.245362255057922E-002 &1.00&\\
&&$P^0$ & 20&8.697106637946338E-003& 1.01 &2.125073937587364E-002& 1.00 &\\
&&& 40&4.341672465121832E-003& 1.00 &1.062831369241851E-002&1.00&\\
 \cline{3-8}
 &&& 5&9.912664929559693E-003& - &2.760460772303402E-002& -&\\
&$\alpha=0.8$&& 10&3.562659912073956E-003& 1.48 &1.080630043834668E-002&1.35&\\
&&$P^1$ &20&1.086605203698557E-003&1.71 &3.421287359547415E-003& 1.66 &\\
&&& 40&2.902460763410707E-004& 1.90 &9.201811924158254E-004&1.90&\\
 \cline{3-8}
 &&& 5&7.988141577793738E-004& - &3.416151504204671E-003& -&\\
&&& 10& 8.626566626691957E-005& 3.21 &3.693913312302985E-004&3.21&\\
&&$P^2$ & 20&1.046396713324052E-005& 3.04 &4.454319799217823E-005 &3.05 &\\
&&& 40&1.436914611732205E-006& 2.86 &5.619072907782352E-006&2.99&\\
 \cline{1-8}
 \toprule
 \end{tabular*}
\end{center}
 \end{table}

\begin{table}[htbp]
\tabcolsep 0pt \caption{Spatial accuracy test on uniform meshes with generalized alternating
numerical fluxes when $\delta=0.1$,~~  $\gamma=2, M=10^3, T=1$.}
 \vspace*{-24pt}
\begin{center}
\def\temptablewidth{1.0\textwidth}
{\rule{\temptablewidth}{1pt}}
\begin{tabular*}{\temptablewidth}{@{\extracolsep{\fill}}|c|c|ccccccc|}
 $\delta$      &$\alpha$       &  $P^k$  &  $N$ &   $L^2$-error& order & $L^\infty$-error & order&\\
 \cline{1-8}
&&& 5&3.600997655347402E-002 & - &8.470765811325168E-002& -&\\
&&& 10&1.751495701129877E-002& 1.04 &4.247840062543977E-002 &1.00&\\
&&$P^0$ & 20&8.698273697461307E-003& 1.01 &2.125369555415509E-002& 1.00 &\\
&&& 40&4.341798450317296E-003& 1.00 &1.062863299729018E-002&1.00&\\
 \cline{3-8}
&& & 5&9.125860752324633E-003& - &3.386112146946083E-002& -&\\
&$\alpha=0.1$&& 10&2.295048237777114E-003& 1.99 &8.756550512034528E-003&1.95&\\
&&$P^1$ &20&5.745423502445809E-004&2.00 &2.207822490277067E-003& 1.98& \\
&&& 40&1.436832777579926E-004& 2.00 &5.554382129445423E-004&1.99&\\
 \cline{3-8}
 &&& 5&9.050669939275690E-004& - &4.298666679244556E-003& -&\\
&&& 10& 1.151488108051345E-004& 2.97 & 5.375579180586712E-004&3.00&\\
&&$P^2$ & 20&1.445721437410244E-005& 2.99 &6.924567827167210E-005 &2.96 &\\
&&& 40&1.809466716341844E-006& 3.00 &8.720509756160153E-006&2.99&\\
 \cline{2-8}
 &&& 5&3.594827779072097E-002 & - &8.455751510071857E-002& -&\\
&&& 10&1.750788942725992E-002& 1.04 &4.246067080980019E-002 &1.00&\\
&&$P^0$ & 20&8.697412259699867E-003& 1.01 &2.125151852856288E-002& 1.00 &\\
$\delta=0.1$&&& 40&4.341693080827747E-003& 1.00 &1.062836627483681E-002&1.00&\\
\cline{3-8}
 &&& 5&9.121793113875693E-003& - &3.384039722034468E-002& -&\\
&$\alpha=0.6$&& 10&2.294831526335838E-003& 1.99 &8.755484555290433E-003&1.95&\\
&&$P^1$ &20&5.745294690001247E-004&2.00 &2.207828785762922E-003& 1.98& \\
&&& 40&1.436825323167711E-004& 2.00 &5.555100933167800E-004&1.99&\\
 \cline{3-8}
 &&& 5&9.047609877074052E-004& - &4.297245174444950E-003& -&\\
&&& 10& 1.151394966919664E-004& 2.97 &5.375058641304305E-004&3.00&\\
&&$P^2$ & 20&1.445715309857497E-005& 2.99 &6.924406859557769E-005 &2.96 &\\
&&& 40&1.811270951788605E-006& 3.00 &8.720463021409203E-006&2.99&\\
 \cline{2-8}
&&& 5&3.592329974624144E-002& - &8.449616965675322E-002& -&\\
&&& 10&1.750510512702476E-002& 1.04 &4.245362255057922E-002 &1.00&\\
&&$P^0$ & 20&8.697106637946338E-003& 1.01 &2.125073937587364E-002& 1.00 &\\
&&& 40&4.341672465121832E-003& 1.00 &1.062831369241851E-002&1.00&\\
 \cline{3-8}
 &&& 5&9.120195825181981E-003& - &3.383289362730243E-002& -&\\
&$\alpha=0.8$&& 10&2.294751340921533E-003& 1.99 &8.755789596454649E-003&1.95&\\
&&$P^1$ &20&5.745261460924934E-004&2.00 &2.208566384755528E-003& 1.98 &\\
&&& 40&1.436839091653192E-004& 2.00 &5.562652869576801E-004&1.99&\\
 \cline{3-8}
 &&& 5&9.046409059470311E-004& - &4.297126983902264E-003& -&\\
&&& 10&1.151376194022218E-004& 2.97 &5.374880423776833E-004&3.00&\\
&&$P^2$ & 20&1.447012828175748E-005& 2.99 &6.924379906001369E-005 &2.96 &\\
&&& 40&1.912757116440075E-006& 2.99 &8.835769827388040E-006&2.99&\\
\cline{1-8}
 \toprule
 \end{tabular*}
\end{center}
 \end{table}

\begin{table}[htbp]
\tabcolsep 0pt \caption{Spatial accuracy test on nonuniform meshes with generalized alternating
numerical fluxes when $\delta=0.3$,~~  $\gamma=2, M=10^3, T=1$.}
 \vspace*{-24pt}
\begin{center}
\def\temptablewidth{1.0\textwidth}
{\rule{\temptablewidth}{1pt}}
\begin{tabular*}{\temptablewidth}{@{\extracolsep{\fill}}|c|c|ccccccc|}
 $\delta$      &$\alpha$       &  $P^k$  &  $N$ &   $L^2$-error& order & $L^\infty$-error & order&\\
 \cline{1-8}
&&& 5&6.559598407976026E-002 & - &0.141942066666751& -&\\
&&& 10&2.141331033721447E-002& 1.61 &5.734028406382948E-002 &1.30&\\
&&$P^0$ & 20&9.793316970413680E-003& 1.12 &2.815183989101257E-002& 1.02 &\\
&&& 40&4.809485138977877E-003& 1.02 &1.407535025139766E-002&1.00&\\
 \cline{3-8}
&& & 5&9.334237440748634E-003& - &3.145978397121429E-002& -&\\
&$\alpha=0.1$&& 10&3.638112329117496E-003& 1.36 &1.339194646885009E-002&1.23&\\
&&$P^1$ &20&1.106145242636678E-003&1.72 &4.300814221832788E-003& 1.64& \\
&&& 40&2.952217379902808E-004& 1.91 &1.195193749833484E-003&1.85&\\
 \cline{3-8}
 &&& 5&1.010581921645606E-003& - &5.126434823465115E-003& -&\\
&&& 10&1.068201999084790E-004& 3.24 &5.971277580741609E-004&3.10&\\
&&$P^2$ & 20&1.389860346772117E-005& 2.94 &8.309553534868077E-005 &2.85 &\\
&&& 40&1.770704698351656E-006& 2.97 &1.076235396982232E-005&2.95&\\
\cline{2-8}
 &&& 5&6.519959540296516E-002 & - &0.141364976011094& -&\\
&&& 10&2.137597683554834E-002& 1.60 &5.728856602677920E-002 &1.31&\\
&&$P^0$ & 20&9.788419793457237E-003& 1.12 &2.814456788448857E-002& 1.02 &\\
$\delta=0.3$&&& 40&4.808850888477931E-003& 1.01 &1.407438548005000E-002&1.00&\\
\cline{3-8}
 &&& 5&9.324055653453555E-003& - &3.148494472151778E-002& -&\\
&$\alpha=0.6$&& 10&3.637084108871119E-003& 1.36 &1.339218081832141E-002&1.23&\\
&&$P^1$ &20&1.106059577072876E-003&1.72 &4.300878545495546E-003& 1.64& \\
&&& 40&2.952158819276407E-004& 1.91 &1.195178864173474E-003&1.85&\\
 \cline{3-8}
 &&& 5&1.010440003914728E-003& - &5.125652210367676E-003& -&\\
&&& 10&1.068162274307204E-004& 3.23 &5.971411497308199E-004&3.10&\\
&&$P^2$ & 20&1.389837000772890E-005& 2.94 &8.309427296208810E-005 &2.85&\\
&&& 40&1.770454293076160E-006& 2.97 &1.076230665963618E-005&2.95&\\
 \cline{2-8}
 &&& 5&6.458019587494233E-002& - &0.140458685661883& -&\\
&&& 10&2.131839260620296E-002& 1.60 &5.720848587853570E-002&1.31&\\
&&$P^0$ & 20&9.780967395542408E-003& 1.12 &2.813343523512223E-002& 1.01 &\\
&&& 40&4.807925152620669E-003& 1.02 &1.407295053444370E-002&1.00&\\
 \cline{3-8}
 &&& 5&9.308221701165681E-003& - &3.152477270193341E-002& -&\\
&$\alpha=0.8$&& 10&3.635461850529788E-003& 1.36 &1.339294605022746E-002&1.24&\\
&&$P^1$ &20&1.105916081128332E-003&1.72 &4.301517739581207E-003& 1.64 &\\
&&& 40&2.952039890812090E-004& 1.91 &1.195672409974230E-003&1.85&\\
 \cline{3-8}
 &&& 5&1.010240608178436E-003& - &5.124773588667655E-003& -&\\
&&& 10&1.068123075153382E-004& 3.23 &5.971640034306184E-004&3.10&\\
&&$P^2$ & 20&1.390323465425386E-005& 2.94 &8.309265950586455E-005&2.85 &\\
&&& 40&1.809513967454312E-006& 2.98 &1.076227541254043E-005&2.95&\\
\cline{1-8}
 \toprule
 \end{tabular*}
\end{center}
 \end{table}

\begin{table}[htbp]
\tabcolsep 0pt \caption{Spatial accuracy test on nonuniform meshes with generalized alternating
numerical fluxes when $\delta=0.1$,~~  $\gamma=2, M=10^3, T=1$.}
 \vspace*{-24pt}
\begin{center}
\def\temptablewidth{1.0\textwidth}
{\rule{\temptablewidth}{1pt}}
\begin{tabular*}{\temptablewidth}{@{\extracolsep{\fill}}|c|c|ccccccc|}
 $\delta$      &$\alpha$       &  $P^k$  &  $N$ &   $L^2$-error& order & $L^\infty$-error & order&\\
 \cline{1-8}
&&& 5&4.554426032174388E-002 & - &0.115718177138195 & -&\\
&&& 10&1.907614555969082E-002& 1.25 &5.320421957088832E-002 &1.12&\\
&&$P^0$ & 20&9.138240767269679E-003& 1.06 &2.598259449662434E-002& 1.03 &\\
&&& 40&4.518863802087745E-003& 1.01 &1.291377789093143E-002&1.00&\\
 \cline{3-8}
&& & 5&8.849830180737654E-003& - &3.969655576082662E-002& -&\\
&$\alpha=0.1$&& 10&2.670888776568979E-003& 1.73 &1.117674633593155E-002&1.83&\\
&&$P^1$ &20&6.856209305073176E-004&1.96 &2.993783590210047E-003& 1.90& \\
&&& 40&1.726263868565285E-004& 1.99 &7.624154117570892E-004&1.97&\\
 \cline{3-8}
 &&& 5&1.082053891229540E-003& - &5.632531555489165E-003& -&\\
&&& 10&1.177827066011148E-004& 3.20 &7.864352387506149E-004&2.84&\\
&&$P^2$ & 20&1.464825722091804E-005& 3.01 &9.550948635494300E-005 &3.04 &\\
&&& 40&1.829209283100811E-006& 3.00 &1.207411590455412E-005&2.98&\\
\cline{2-8}
&&& 5&4.543056490078846E-002 & - &0.115474889448670& -&\\
&&& 10&1.906295169538061E-002& 1.25 &5.318082033209643E-002 &1.12&\\
&&$P^0$ & 20&9.136635333555037E-003& 1.06 &2.597982073437711E-002& 1.03 &\\
$\delta=0.1$&&& 40&4.518663596834055E-003& 1.01 &1.291343451385024E-002&1.00&\\
\cline{3-8}
 &&& 5&8.846862605510398E-003& - &3.970373991829590E-002& -&\\
&$\alpha=0.6$&& 10&2.670694860720131E-003& 1.73 &1.117614841378017E-002&1.83&\\
&&$P^1$ &20&6.856086070180152E-004&1.96&2.993777705012829E-003& 1.90& \\
&&& 40&1.726256192263683E-004& 1.99 &7.623984725640964E-004&1.97&\\
 \cline{3-8}
 &&& 5&1.081894553798753E-003& - &5.631794818602251E-003& -&\\
&&& 10&1.177789297367418E-004& 3.20 &7.864212743115771E-004&2.84&\\
&&$P^2$ & 20&1.464811122465675E-005& 3.01 &9.550366030672275E-005 &3.04&\\
&&& 40&1.828970584365630E-006& 3.00 &1.207133596128895E-005&2.99&\\
 \cline{2-8}
 &&& 5&4.525430019654393E-002& - &0.115095429944861& -&\\
&&& 10&1.904269469568265E-002& 1.25 &5.314471217556128E-002&1.12&\\
&&$P^0$ & 20&9.134222951010450E-003& 1.06 &2.597562042326319E-002& 1.03 &\\
&&& 40&4.518387505390618E-003& 1.01 &1.291295179596993E-002&1.00&\\
 \cline{3-8}
 &&& 5&8.842271383522345E-003& - &3.971560941693522E-002& -&\\
&$\alpha=0.8$&& 10&2.670392762456530E-003& 1.73 &1.117577463774522E-002&1.83&\\
&&$P^1$ &20&6.855884088997946E-004&1.96 &2.994298251659394E-003& 1.90 &\\
&&& 40&1.726244848834557E-004& 1.99 &7.628868021073432E-004&1.97&\\
 \cline{3-8}
 &&& 5&1.081660399859714E-003& - &5.630991561418289E-003& -&\\
&&& 10&1.177745623909475E-004& 3.20 &7.864030096010258E-004&2.84&\\
&&$P^2$ & 20&1.465280637729603E-005& 3.01 &9.565506409087154E-005&3.04 &\\
&&& 40&1.866808245534035E-006& 3.00 &1.214811102974098E-005&2.99&\\
\cline{1-8}
 \toprule
 \end{tabular*}
\end{center}
 \end{table}

\begin{table}[t]
\tabcolsep 0pt \caption{ Temporal accuracy test using piecewise $P^2$ polynomials for the scheme (\ref{scheme}) with generalized alternating numerical fluxes when $N=100, T=1.$}
 \vspace*{-24pt}
\begin{center}
\def\temptablewidth{1.0\textwidth}
{\rule{\temptablewidth}{1pt}}
\begin{tabular*}{\temptablewidth}{@{\extracolsep{\fill}}c|c|ccccccc|}
&$\delta$  &$\alpha$  &  $\tau$ &   $L^2$-error& order & $L^\infty$-error & order&\\
 \cline{2-8}
&& & 0.04&8.608763604447880E-006& - &1.219684581693636E-005& - &\\
&&& 0.02& 3.086574970641430E-006 & 1.48 &4.409647689024299E-006& 1.47& \\
&&$\alpha=0.5$ &0.01&1.109311333958263E-006& 1.48&1.667659212722938E-006&1.40&  \\
&&& 0.005& 4.122054755449973E-007& 1.43&6.985320445991206E-007& 1.26 &\\
 \cline{3-8}
&$\delta=0.1$&& 0.04&2.391687146347450E-005& - &3.383563665176892E-005& -& \\
&&& 0.02&9.853827361471093E-006 & 1.28&1.395429884359922E-005& 1.28 &\\
&&$\alpha=0.7$ &0.01&4.116933701786636E-006& 1.26&5.856072709420346E-006& 1.26 & \\
&&& 0.005& 1.782609365838843E-006& 1.21&2.587548893207003E-006& 1.18 &\\
 \cline{2-8}
 &&& 0.04&8.608382726939050E-006& - &1.217526939467639E-005& -& \\
&&& 0.02& 3.085511558119693E-006 & 1.48 &4.377333736760303E-006& 1.48& \\
&&$\alpha=0.5$ &0.01&1.106348076087462E-006& 1.48&1.601411114160456E-006&1.45 & \\
&&& 0.005& 4.041621399151297E-007& 1.45&6.654877294648420E-007& 1.27 &\\
 \cline{3-8}
&$\delta=0.3$&& 0.04&2.391673470774867E-005& - &3.382490687975359E-005& - &\\
&&& 0.02&9.853494675524166E-006 & 1.28&1.393619582018557E-005& 1.28& \\
&&$\alpha=0.7$ &0.01&4.116136624036466E-006& 1.26&5.832318129950220E-006& 1.26 & \\
&&& 0.005& 1.780767052630476E-006& 1.21&2.542037339958725E-006& 1.20 &\\
 \cline{2-8}
 \end{tabular*}
\end{center}
 \end{table}

\textbf{Example 4.2.}
Let us continue to consider the problem (\ref{question})
with $\rho=1$ and the exact solution
$$
u(x,t)=e^{-\gamma t}t^2x^2(1-x)^2.
$$
The forcing term on the right-hand side is determined by
the exact solution.

We implement the scheme (\ref{scheme}) in $\Omega=[0,1]$.
Tables 6-9 show the convergence orders for the $L^2$-norm
and $L^\infty$-norm errors at time $T=1$
for several $\alpha$s and $\delta$s
on the uniform mesh and nonuniform mesh, respectively.
The optimal convergence orders in these tables show
that the result in Theorem 3.2 is sharp.

\begin{table}[htbp]
\tabcolsep 0pt \caption{Spatial accuracy test on uniform meshes with generalized alternating
numerical fluxes when $\rho=1, \delta=0.2$,~~  $\gamma=2, M=10^3, T=1$.}
 \vspace*{-24pt}
\begin{center}
\def\temptablewidth{1.0\textwidth}
{\rule{\temptablewidth}{1pt}}
\begin{tabular*}{\temptablewidth}{@{\extracolsep{\fill}}|c|c|ccccccc|}
 $\delta$      &$\alpha$       &  $P^k$  &  $N$ &   $L^2$-error& order & $L^\infty$-error & order&\\
 \cline{1-8}
&&& 5&1.582340814827774E-003 & - &3.587747814566711E-003& -&\\
&&& 10&6.072920572041250E-004& 1.38 &1.361992544606309E-003 &1.39&\\
&&$P^0$ & 20&2.781722995839050E-004& 1.12 &6.593363827866452E-004 & 1.05 &\\
&&& 40&1.358688098753038E-004& 1.03 &3.262244021583708E-004&1.01&\\
 \cline{3-8}
 &&& 5&2.976903263874860E-004& - &8.888076108539667E-004& -&\\
&$\alpha=0.3$&& 10&9.380682880588326E-005& 1.67 &3.747599073017630E-004&1.25&\\
&&$P^1$ &20&2.570892164256014E-005&1.87 &1.197734627369952E-004& 1.65& \\
&&& 40&6.616980217734994E-006& 1.96 &3.372607623089679E-005&1.83&\\
 \cline{3-8}
&& & 5&3.744225938986966E-005 & - &2.066750403204734E-004& -&\\
&&& 10&4.242629022151605E-006& 3.14 &2.905747077063567E-005&2.83&\\
&&$P^2$ & 20&5.108240307324746E-007& 3.05 &3.733062412131737E-006&2.96& \\
&&& 40&1.404276778729993E-007& 1.86 &4.548930764244681E-007&3.03&\\
 \cline{2-8}
&&& 5&1.576014035107718E-003& - &3.577323483172182E-003& -&\\
&&& 10&6.065883866688050E-004& 1.38 &1.361430273966585E-003&1.39&\\
&&$P^0$ & 20&2.780870829316098E-004& 1.12 &6.589782886991372E-004 & 1.05& \\
&&& 40&1.358586702912018E-004& 1.03 &3.262537733518698E-004&1.01&\\
 \cline{3-8}
& && 5&2.975626899907474E-004& - &8.888043631958508E-004& -&\\
&$\alpha=0.5$&& 10&9.379482889619085E-005& 1.68 &3.746166818088215E-004&1.25&\\
&&$P^1$ &20&2.570772139874646E-005&1.87 &1.196172349767182E-004& 1.65& \\
$\delta=0.2$&&& 40&6.615791862540715E-006& 1.96 &3.356898366286719E-005&1.83&\\
 \cline{3-8}
 &&& 5&3.743605733712328E-005& - &2.068108840347286E-004& -&\\
&&& 10&4.240769769090305E-006& 3.14 &2.921541320911186E-005&2.82&\\
&&$P^2$ & 20&4.961774292224783E-007& 3.09 &3.890639954380062E-006 &2.90 &\\
&&& 40&7.051223319124203E-008& 2.81 &5.312153408062071E-007&2.87&\\
 \cline{2-8}
&&& 5&1.569573150535817E-003 & - &3.566532563472458E-003& -&\\
&&& 10&6.058781928822529E-004& 1.37 &1.361570566765813E-003 &1.39&\\
&&$P^0$ & 20&2.780022433132069E-004& 1.12 &6.584717897009694E-004& 1.04 &\\
&&& 40&1.358494586510882E-004& 1.03 &3.265674113640243E-004&1.01&\\
 \cline{3-8}
 &&& 5&2.974342179950080E-004& - &8.886664664762281E-004& -&\\
&$\alpha=0.7$&& 10&9.378333026032094E-005& 1.67 &3.743331911596685E-004&1.25&\\
&&$P^1$ &20&2.570912149705274E-005&1.87 &1.193198577654686E-004& 1.65 &\\
&&& 40&6.624911739109129E-006& 1.96 &3.327059953717988E-005&1.84&\\
 \cline{3-8}
&&& 5&3.743175301428064E-005& - &2.070928579500273E-004& -&\\
&&& 10&4.255010460194694E-006& 3.13 &2.951598305808386E-005&2.81&\\
&&$P^2$ & 20&6.067267031675130E-007& 2.81 &4.189879914070192E-006 &2.82 &\\
&&& 40& 8.591886309964037E-008&2.82 &6.269178180106001E-007&2.74&\\
 \cline{1-8}
 \toprule
 \end{tabular*}
\end{center}
 \end{table}

\begin{table}[htbp]
\tabcolsep 0pt \caption{Spatial accuracy test on uniform meshes with generalized alternating
numerical fluxes when $\rho=1, \delta=0.6$,~~  $\gamma=2, M=10^3, T=1$.}
 \vspace*{-24pt}
\begin{center}
\def\temptablewidth{1.0\textwidth}
{\rule{\temptablewidth}{1pt}}
\begin{tabular*}{\temptablewidth}{@{\extracolsep{\fill}}|c|c|ccccccc|}
 $\delta$      &$\alpha$       &  $P^k$  &  $N$ &   $L^2$-error& order & $L^\infty$-error & order&\\
 \cline{1-8}
&&& 5&2.263070303841627E-003 & - &4.577360163679048E-003& -&\\
&&& 10&6.707774222704496E-004& 1.75 &1.436684775454216E-003 &1.67&\\
&&$P^0$ & 20&2.853444715298516E-004& 1.23 &6.606848412090876E-004 & 1.12 &\\
&&& 40&1.367407768126421E-004& 1.06 &3.273628226099778E-004&1.01&\\
 \cline{3-8}
&&& 5&2.974509312553489E-004& - &7.150301084008492E-004& -&\\
&$\alpha=0.3$&& 10&1.271245501318707E-004& 1.22 &3.981226597552988E-004&0.84&\\
&&$P^1$ &20&5.095203810569959E-005&1.32&1.756826753133757E-004& 1.18& \\
&&& 40&1.628155726102348E-005& 1.65 &6.113921628532708E-005&1.52&\\
 \cline{3-8}
 && & 5&4.741117314917393E-005 & - &2.322474249148421E-004& -&\\
&&& 10&5.251072284968953E-006& 3.17 &3.482052702302946E-005&2.73&\\
&&$P^2$ & 20&5.487061550738519E-007& 3.26 &4.391297775017012E-006&2.99& \\
&&& 40&1.398475833157788E-007& 1.97 &5.076183265380021E-007&3.11&\\
 \cline{2-8}
&&& 5&2.244975493970608E-003& - &4.553595493445027E-003& -&\\
&&& 10&6.694913740987843E-004& 1.75 &1.433071024540420E-003&1.66&\\
&&$P^0$ & 20&2.851933816998400E-004& 1.23 &6.606969556207020E-004& 1.12& \\
&&& 40&1.367226188180968E-004& 1.06 &3.275293007699463E-004&1.01&\\
 \cline{3-8}
& && 5&2.970077416293948E-004& - &7.150618409357242E-004& -&\\
&$\alpha=0.5$&& 10&1.270399125198733E-004& 1.23 &3.980425972542222E-004&0.85&\\
&&$P^1$ &20&5.093888725451710E-005&1.32 &1.755280846267336E-004& 1.18& \\
$\delta=0.6$&&& 40&1.627971177773421E-005& 1.65 &6.098150580908531E-005&1.53&\\
 \cline{3-8}
 &&& 5&4.738891552206912E-005& - &2.323268736959109E-004& -&\\
&&& 10&5.248488428614583E-006& 3.17 &3.497357272207592E-005&2.73&\\
&&$P^2$ & 20&5.350808403013323E-007& 3.29 &4.548739501698698E-006&2.94 &\\
&&& 40&6.934968715889372E-008& 2.95 &6.122329160500268E-007&2.89&\\
 \cline{2-8}
 &&& 5&2.226364987023399E-003 & - &4.528968446397434E-003& -&\\
&&& 10&6.681909572138881E-004& 1.73 &1.429260382731581E-003 &1.66&\\
&&$P^0$ & 20&2.850422396157703E-004& 1.23 &6.608559171080138E-004& 1.11 &\\
&&& 40&1.367054729489756E-004& 1.06 &3.278432462824107E-004&1.01&\\
 \cline{3-8}
 &&& 5&2.965586452297552E-004& - &7.149566598926194E-004& -&\\
&$\alpha=0.7$&& 10&1.269543669818585E-004& 1.22 &3.978234263835360E-004&0.85&\\
&&$P^1$ &20&5.092673020318355E-005&1.32 &1.752325021227329E-004& 1.18 &\\
&&& 40&1.628197579228578E-005& 1.65 &6.068249513503552E-005&1.53&\\
 \cline{3-8}
&&& 5&4.736816730385127E-005& - &2.325522068324162E-004& -&\\
&&& 10&5.258924978488712E-006& 3.17 &3.526923646273578E-005&2.72&\\
&&$P^2$ & 20&6.389205641380220E-007& 3.04 &4.847842978104736E-006 &2.86 &\\
&&& 40&  8.501389966985875E-008&2.91 & 6.585375965448209E-007&2.88&\\
 \cline{1-8}
 \toprule
 \end{tabular*}
\end{center}
 \end{table}

\begin{table}[htbp]
\tabcolsep 0pt \caption{Spatial accuracy test on nonuniform meshes with generalized alternating
numerical fluxes when $\rho=1, \delta=0.2$,~~  $\gamma=2, M=10^3, T=1$.}
 \vspace*{-24pt}
\begin{center}
\def\temptablewidth{1.0\textwidth}
{\rule{\temptablewidth}{1pt}}
\begin{tabular*}{\temptablewidth}{@{\extracolsep{\fill}}|c|c|ccccccc|}
 $\delta$      &$\alpha$       &  $P^k$  &  $N$ &   $L^2$-error& order & $L^\infty$-error & order&\\
 \cline{1-8}
&&& 5&1.517845954000122E-003& - &3.511606734885603E-003& -&\\
&&& 10&6.227976716020462E-004& 1.28 &1.670434180879555E-003 &1.07&\\
&&$P^0$ & 20&2.917321657674542E-004& 1.09 &8.185201216153321E-004 & 1.02 &\\
&&& 40&1.431224670285340E-004& 1.02 &4.068120471581059E-004&1.01&\\
 \cline{3-8}
&&& 5&3.242733095936642E-004& - &1.031247407943849E-003& -&\\
&$\alpha=0.3$&& 10&9.642143896521544E-005& 1.75 &4.664783993685712E-004&1.14&\\
&&$P^1$ &20&2.638283335064153E-005&1.87&1.497589900759262E-004& 1.64& \\
&&& 40&6.784781440645293E-006& 1.96 &4.208948155767745E-005&1.83&\\
 \cline{3-8}
 && & 5&4.371291470422284E-005& - &2.205191157852509E-004& -&\\
&&& 10&4.737332465461207E-006& 3.21 &5.279340967235548E-005&2.06&\\
&&$P^2$ & 20&5.836666419066179E-007& 3.02 &6.846287374035127E-006&2.95& \\
&&& 40&1.451353937788389E-007& 2.01 &7.721987122251807E-007&3.14&\\
 \cline{2-8}
&&& 5&1.511832756597273E-003& - &3.502121979914235E-003& -&\\
&&& 10&6.220893257475209E-004& 1.28 &1.670078148163840E-003&1.06&\\
&&$P^0$ & 20&2.916417957441209E-004& 1.09 &8.186795059360371E-004& 1.03& \\
&&& 40&1.431115461417255E-004& 1.03 &4.069902921658920E-004&1.01&\\
 \cline{3-8}
& && 5&3.241445818441831E-004& - &1.031010873877333E-003& -&\\
&$\alpha=0.5$&& 10&9.640915416227526E-005& 1.75&4.663237471849166E-004&1.14&\\
&&$P^1$ &20&2.638161337326710E-005&1.87 &1.496019341579287E-004& 1.64& \\
$\delta=0.2$&&& 40&6.783621296347129E-006& 1.96 &4.193227445160367E-005&1.83&\\
 \cline{3-8}
 &&& 5&4.370398829079597E-005& - &2.206503138086216E-004& -&\\
&&& 10&4.735605756347349E-006& 3.21 &5.295075267031514E-005&2.06&\\
&&$P^2$ & 20&5.708884776768829E-007& 3.05 &7.003847236448262E-006&2.92 &\\
&&& 40&7.947531188294052E-008& 2.84 &9.294866692183973E-007&2.91&\\
 \cline{2-8}
 &&& 5&1.505710253623656E-003 & - &3.492298886676585E-003& -&\\
&&& 10&6.213745175059577E-004& 1.28 &1.669866295206499E-003 &1.06&\\
&&$P^0$ & 20&2.915517673954438E-004& 1.09 &8.189868083813912E-004& 1.03 &\\
&&& 40&1.431015120142753E-004& 1.03 &4.073153866903747E-004&1.01&\\
 \cline{3-8}
 &&& 5&3.240154726586564E-004& - &1.030637501521249E-003& -&\\
&$\alpha=0.7$&& 10&9.639741117415519E-005& 1.75 &4.660275194837704E-004&1.15&\\
&&$P^1$ &20&2.638293775032076E-005&1.87 &1.493034254027140E-004& 1.64 &\\
&&& 40&6.792517114587242E-006& 1.96 &4.163370092550497E-005&1.84&\\
 \cline{3-8}
&&& 5&4.369666230527429E-005& - &2.209276622101592E-004& -&\\
&&& 10&4.748293818147065E-006& 3.20 &5.325073483141655E-005&2.05&\\
&&$P^2$ & 20&6.692039201471664E-007& 2.83 &7.303070295557957E-006 &2.87 &\\
&&& 40& 8.581080963134642E-008&2.96 & 9.228188670033924E-007&2.98&\\
 \cline{1-8}
 \toprule
 \end{tabular*}
\end{center}
 \end{table}

\begin{table}[htbp]
\tabcolsep 0pt \caption{Spatial accuracy test on nonuniform meshes with generalized alternating
numerical fluxes when $\rho=1, \delta=0.6$,~~  $\gamma=2, M=10^3, T=1$.}
 \vspace*{-24pt}
\begin{center}
\def\temptablewidth{1.0\textwidth}
{\rule{\temptablewidth}{1pt}}
\begin{tabular*}{\temptablewidth}{@{\extracolsep{\fill}}|c|c|ccccccc|}
 $\delta$      &$\alpha$       &  $P^k$  &  $N$ &   $L^2$-error& order & $L^\infty$-error & order&\\
 \cline{1-8}
&&& 5&2.218435560209597E-003 & - &4.988165866475595E-003& -&\\
&&& 10&7.026515160120938E-004& 1.66 &1.615537336821858E-003 &1.63&\\
&&$P^0$ & 20&3.238249412680320E-004& 1.12 &9.344015225186842E-004 & 0.79 &\\
&&& 40&1.711992658914959E-004& 0.92 &5.144905069215206E-004&0.86&\\
 \cline{3-8}
&&& 5&3.213669677505128E-004& - &6.232490424656974E-004& -&\\
&$\alpha=0.3$&& 10&1.281866059118811E-004& 1.33 &3.003107660959630E-004&1.05&\\
&&$P^1$ &20&5.143385599212686E-005&1.32&1.603700097095964E-004& 0.91& \\
&&& 40&1.645991800902195E-005& 1.64 &5.811184200627087E-005&1.46&\\
 \cline{3-8}
 && & 5&5.667492210358004E-005 & - &3.505324110815492E-004& -&\\
&&& 10&5.578235035051399E-006& 3.17 &4.414888437790894E-005&2.99&\\
&&$P^2$ & 20&7.175666478549729E-007& 3.26 &5.738400869813071E-006&2.94& \\
&&& 40&1.571281022897026E-007& 2.19 &8.425543243264881E-007&2.77&\\
 \cline{2-8}
&&& 5&2.200335979338480E-003& - &4.956964974689847E-003& -&\\
&&& 10&7.011795560405282E-004& 1.64 &1.611200638415444E-003&1.62&\\
&&$P^0$ & 20&3.235547410371371E-004& 1.12 &9.339361814586587E-004&0.79& \\
&&& 40&1.711432886897817E-004& 0.92 &5.145417733177721E-004&0.86&\\
 \cline{3-8}
& && 5&3.208864422126183E-004& - &6.227158645046031E-004& -&\\
&$\alpha=0.5$&& 10&1.280994719241301E-004& 1.32 &3.001105667075760E-004&1.05&\\
&&$P^1$ &20&5.142050916436299E-005&1.32 &1.602085136691192E-004&0.91& \\
$\delta=0.6$&&& 40&1.645805875671424E-005& 1.64 &5.795371024010622E-005&1.47&\\
 \cline{3-8}
 &&& 5&5.664565316990849E-005& - &3.502258146088582E-004& -&\\
&&& 10&5.576080115885499E-006& 3.34 &4.398437752902102E-005&2.99&\\
&&$P^2$ & 20&7.071821450175616E-007& 2.98 &5.873580067984358E-006&2.90 &\\
&&& 40&9.970460744355951E-008& 2.83 &8.509068596339163E-007&2.79&\\
 \cline{2-8}
&&& 5&2.181711723758149E-003 & - &4.924677437389718E-003& -&\\
&&& 10&6.996904701082016E-004& 1.63 &1.606649219131593E-003 &1.62&\\
&&$P^0$ & 20&3.232837185701581E-004& 1.11 &9.336145912944857E-004&0.78 &\\
&&& 40&1.710880587147894E-004& 0.92 &5.147405087257530E-004&0.86&\\
 \cline{3-8}
 &&& 5&3.203996862436910E-004& - &6.223220958377807E-004& -&\\
&$\alpha=0.7$&& 10&1.280113679639955E-004& 1.32 &2.997662559044798E-004&1.05&\\
&&$P^1$ &20&5.140813415494180E-005&1.32 &1.599050123649873E-004& 0.91 &\\
&&& 40&1.646026145542672E-005& 1.64 &5.765402843328131E-005&1.47 &\\
 \cline{3-8}
&&& 5&5.661762164027293E-005& - &3.497734484754480E-004& -&\\
&&& 10&5.586181252970688E-006& 3.34 &4.367727087437284E-005&3.00&\\
&&$P^2$ & 20&7.886608314227983E-007& 2.82 &6.172794085018479E-006 &2.82 &\\
&&& 40&  9.631333432841155E-008&3.03 & 1.149606829718052E-006&2.42&\\
 \cline{1-8}
 \toprule
 \end{tabular*}
\end{center}
 \end{table}

 \textbf{Example 4.3.}
In this example we present some numerical solutions
using piecewise $P^2$ polynomials
for the following homogeneous model:
\begin{equation}\label{e2}
_0^CD_t^{\alpha,\gamma}u(x,t)=\frac{\partial^{2}u(x,t)}{\partial x^{2}},
\end{equation}
and the initial conditions is taken as follows:
$$
u(x,0)=e^{-5(x-3)^2}.
$$

Figures 1 and 2 shows the evolution of the solution behavior for different values of $\alpha$ and $\lambda$ with $\delta=0.2$. The numerical results show that the scheme (\ref{scheme}) with the generalized alternating
numerical fluxes (\ref{flux1}) is very effective to handle such problems numerically.

\begin{figure}[htpb]
\centering
\includegraphics[width=0.8\textwidth]{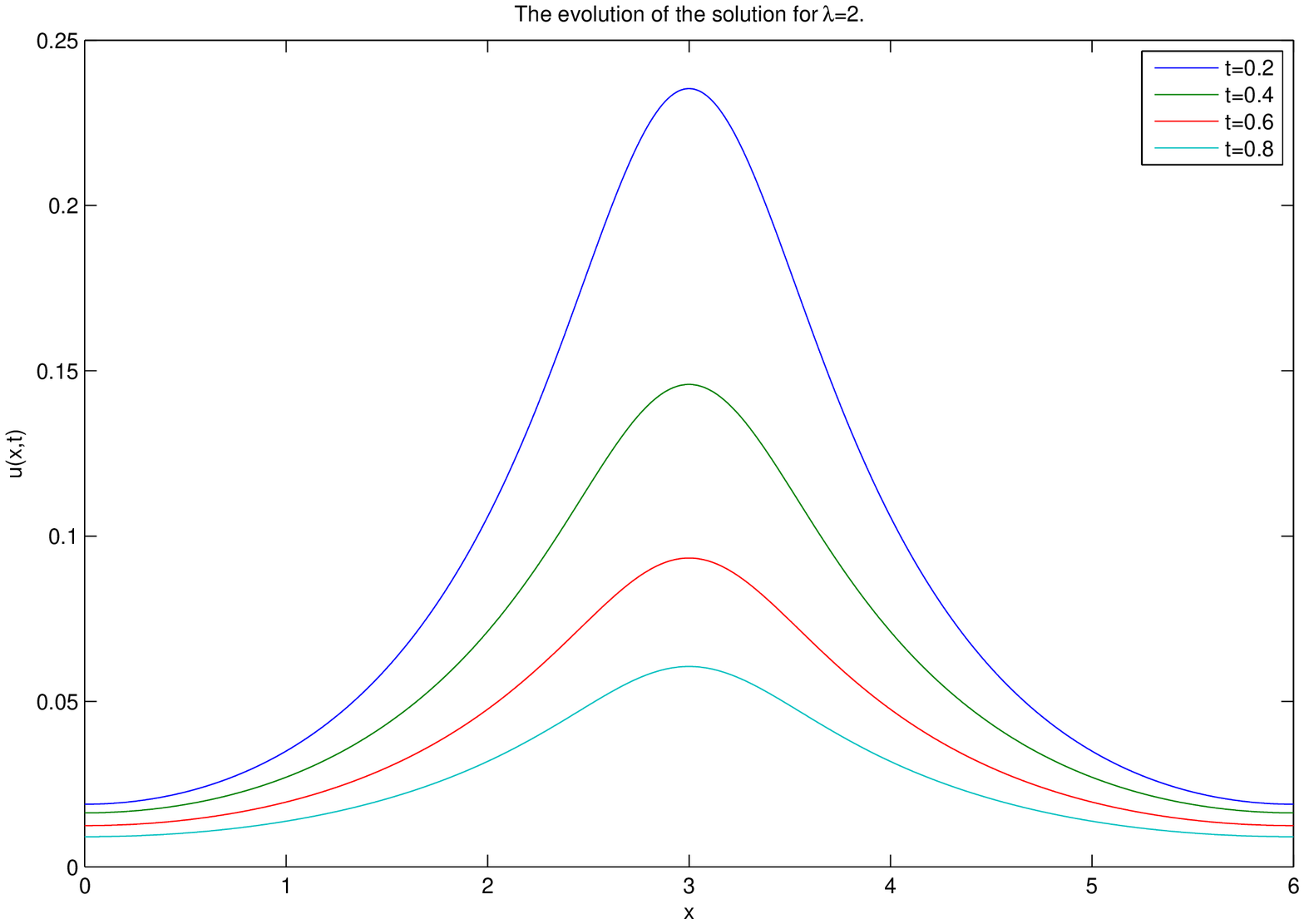}
\includegraphics[width=0.8\textwidth]{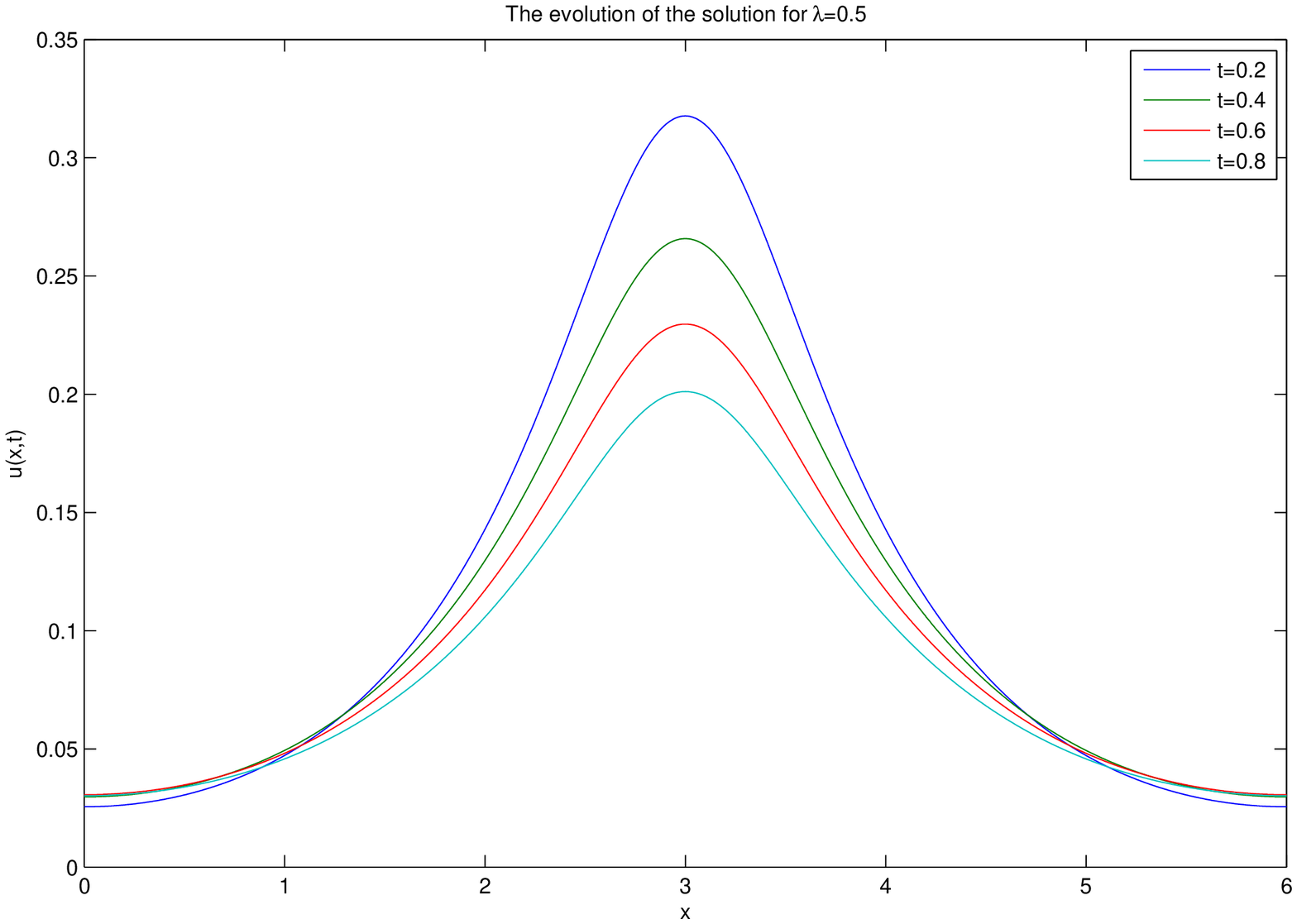}
\caption{The evolution of the solution for $\alpha=0.3$. $\tau=0.001, h=0.01, k=2.$}\label{figure2}
\end{figure}

\begin{figure}[htpb]
\centering
\includegraphics[width=0.8\textwidth]{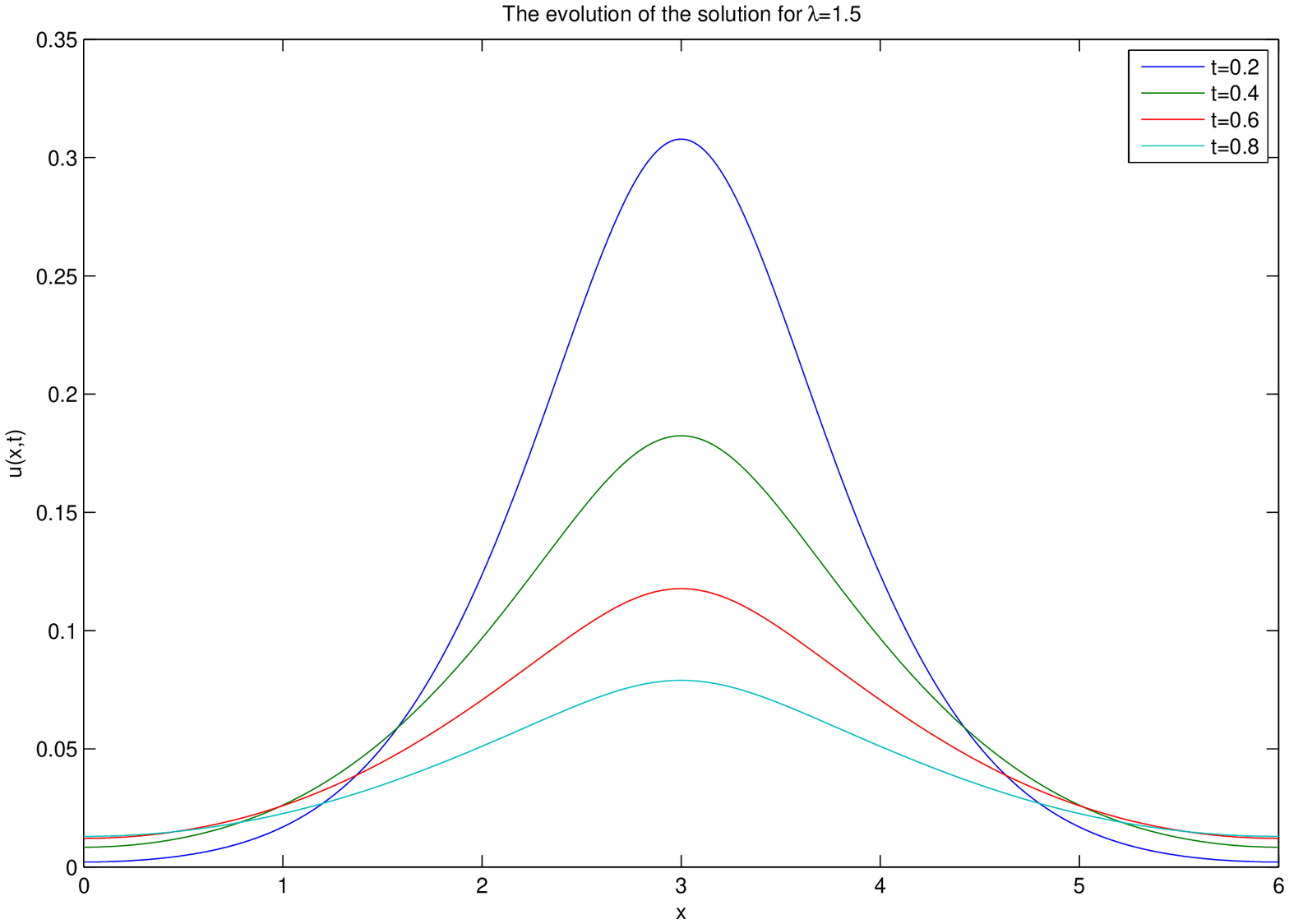}
\includegraphics[width=0.8\textwidth]{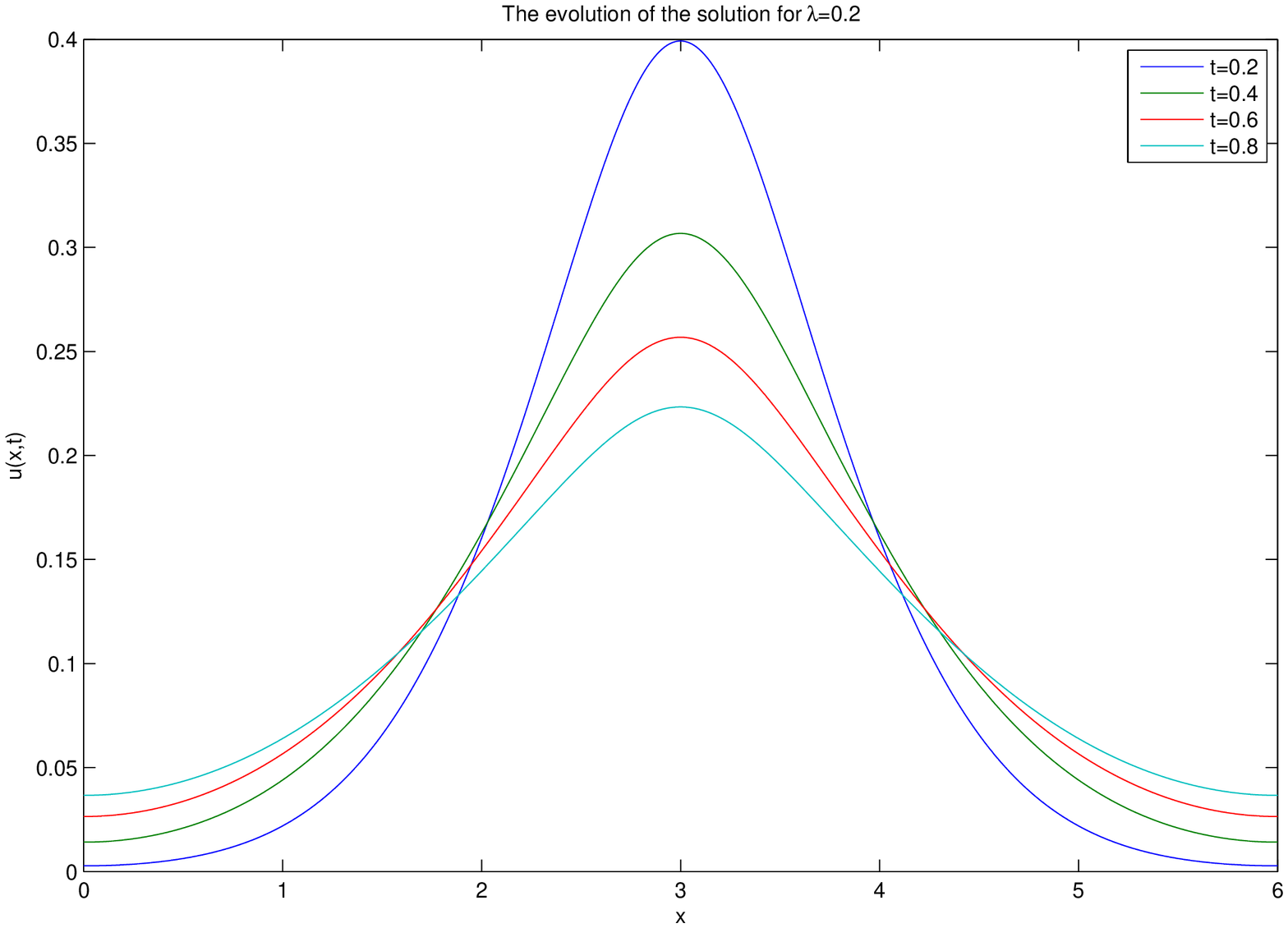}
\caption{The evolution of the solution for $\alpha=0.8$. $\tau=0.001, h=0.01, k=2.$}\label{figure2}
\end{figure}

\section{\large Conclusion}

In this paper we have proposed and analyzed an implicit fully discrete LDG method with the convergent
orders $O(h^{k+1}+(\tau)^{2-\alpha})$ for the tempered fractional diffusion equation.  Numerical examples are provided to confirm the order of convergence and show the effectiveness of the proposed scheme (\ref{scheme}).
It is worth to mention that the scheme can be extended to solve the two or higher dimensional case easily, and the theoretical results are also valid. However, the computation work will be huge. In future we would like to study this problem and try to design a effective scheme for the two dimensional case.

\section*{Acknowledgement}
Supported by the Fundamental Research Funds for the Henan Provincial Colleges and Universities in Henan University of Technology(2018RCJH10), the Training Plan of Young Backbone Teachers in Henan University of Technology(21420049), the Training Plan of Young Backbone Teachers in Colleges and Universities of Henan Province (2019GGJS094), Foundation of Henan Educational Committee(19A110005) and the National Natural Science Foundation of China (11461072).

\end{document}